\documentclass[10pt,english]{article}
\pdfoutput=1

\usepackage[latin1]{inputenc}
\usepackage[T1]{fontenc}
\usepackage{lmodern}
\usepackage{babel}
\usepackage{amsmath,amssymb,amsthm}
\usepackage[marginratio={1:1,1:1},totalwidth=400pt,totalheight=600pt]{geometry}
\usepackage{microtype}
\usepackage{hyperref}
\usepackage{url}
\usepackage{tikz-cd}

\newtheorem{theorem}{Theorem}[section]
\newtheorem{lemma}[theorem]{Lemma}
\newtheorem{proposition}[theorem]{Proposition}
\newtheorem{corollary}[theorem]{Corollary}
\newtheorem{lems}{Lemma}[subsection]

\theoremstyle{definition}

\newtheorem{example}[theorem]{Example}
\newtheorem{remark}[theorem]{Remark}

\makeatletter
\def\journal#1{\gdef\@journal{\footnotetext{Published in #1.}}}
\let\@journal\@empty
\def\subjclass#1{\gdef\@subjclass{\footnotetext{2010 \emph{Mathematics Subject Classification}: #1.}}}
\let\@subjclass\@empty
\def\keywords#1{\gdef\@keywords{\footnotetext{\emph{Key words and phrases}: #1.}}}
\let\@keywords\@empty

\def\@maketitle{
\newpage
\null
\vskip 2em
\begin{center}
\let \footnote \thanks \@journal \@subjclass \@keywords
{\LARGE \@title \par}
\vskip 1.5em
{\large
\lineskip .5em
\begin{tabular}[t]{c}
\@author
\end{tabular}\par}
\vskip 1em
{\large \@date}
\end{center}
\par
\vskip 1.5em}
\makeatother

\title{$C^*$-algebras generated by projective representations \\ of free nilpotent groups}
\author{Tron {\AA}nen Omland}
\date{Updated version, July 2016}
\journal{J.\ Operator Theory, 73(1):3-25, 2015}
\subjclass{Primary 46L05; Secondary 20C25, 20F18, 20J06, 22D25}
\keywords{free nilpotent group, projective unitary representation, twisted group $C^*$-algebra, simplicity, two-cocycle, group cohomology, Heisenberg group, noncommutative $n$-torus}

\begin{document}

\maketitle

\begin{abstract}
We compute the two-cocycles (or multipliers) of the free nilpotent groups of class $2$ and rank $n$ and give conditions for simplicity of the corresponding twisted group $C^*$-algebras. These groups are representation groups for $\mathbb{Z}^n$ and can be considered as a family of generalized Heisenberg groups with higher-dimensional center. Their group $C^*$-algebras are in a natural way isomorphic to continuous fields over $\mathbb{T}^{\frac{1}{2}n(n-1)}$ with the noncommutative $n$-tori as fibers. In this way, the twisted group $C^*$-algebras associated with the free nilpotent groups of class $2$ and rank $n$ may be thought of as ``second order'' noncommutative $n$-tori.
\end{abstract}

\section*{Introduction}
\addcontentsline{toc}{section}{Introduction}

The discrete Heisenberg group may be described as the group generated by three elements $u_1$, $u_2$, and $v_{12}$ satisfying the commutation relations
\begin{displaymath}
[u_1,v_{12}]=[u_2,v_{12}]=1 \quad\text{ and }\quad [u_1,u_2]=v_{12}.
\end{displaymath}
The group has received much attention in the literature,
partly because it is one of the easiest examples of a nonabelian torsion-free group.
Moreover,
the continuous Heisenberg group (see below) is a connected nilpotent Lie group that arises in certain quantum mechanical systems.

As a natural consequence of this attention, several classes of generalized Heisenberg groups have been investigated.
For example, in \cite{Milnes-Walters-4,Milnes-Walters-5} Milnes and Walters describe the four and five-dimensional nilpotent groups,
and in \cite{Lee-Packer-N,Lee-Packer-C} Lee and Packer study the finitely generated torsion-free two-step nilpotent groups with one-dimensional center.

In this paper, on the other hand, we will consider a family of generalized Heisenberg groups, denoted by $G(n)$ for $n\geq 2$, with larger center.
The groups $G(n)$ are the so-called free nilpotent groups of class $2$ and rank $n$ and will be defined properly in Section~\ref{G(n)}.
Here we also provide further motivation for our investigation of these groups.
Inspired by the work of Packer \cite{Packer-H} we compute the second cohomology group $H^2(G(n),\mathbb{T})$ of $G(n)$
and study the structure of the twisted group $C^*$-algebras $C^*(G(n),\sigma)$ associated with two-cocycles $\sigma$ of $G(n)$.

Section~\ref{two-cocycles} is devoted to two-cocycle calculations,
where we decompose $G(n)$ into a semidirect product and apply techniques introduced by Mackey \cite{Mackey}.
In particular, we will see that
\begin{displaymath}
H^2(G(n),\mathbb{T})\cong\mathbb{T}^{\frac{1}{3}(n+1)n(n-1)},
\end{displaymath}
and in Theorem~\ref{main-theorem} we give explicit formulas for the two-cocycles of $G(n)$ up to similarity.

Next, in Section~\ref{algebras} we describe $C^*(G(n),\sigma)$ as a universal $C^*$-algebra of a set of generators and relations.
Then we construct the algebra
that in a natural way appear as a continuous field over the compact space $H^2(G(n),\mathbb{T})$ with $C^*(G(n),\sigma)$ as fibers.
We also explain that for $n=2$, this algebra is the group $C^*$-algebra of the free nilpotent group of class $3$ and rank $2$.

In Section~\ref{simplicity} we investigate the center of $C^*(G(n),\sigma)$
and give conditions for simplicity of these twisted group $C^*$-algebras in Theorem~\ref{simplicity-theorem} and Corollary~\ref{simplicity-formula}.

Finally, in Section~\ref{isomorphisms} we study the automorphism group of $G(n)$
and discuss isomorphism invariants of $C^*(G(n),\sigma)$ coming from $\operatorname{Aut}G(n)$.

\subsection*{Acknowledgements}
\addcontentsline{toc}{subsection}{Acknowledgements}

This version is from July 2016 and includes several computations and details that were left out in the published version of the article,
in addition to a reformulation of Proposition~\ref{similarity} in terms of general semidirect products,
and a new Remark~\ref{new} that makes a connection to more recent work.

A comprehensive summary of this paper was part of the author's PhD dissertation \cite{phd-thesis}
at the Norwegian University of Science and Technology (NTNU) submitted in May~2013.

The author would like to thank Erik B{\'e}dos, first for suggesting the problem of computing the two-cocycles of the group $G(3)$,
and then for providing valuable comments throughout this work.
The author would also like to thank the referee of Journal of Operator Theory for several valuable comments and suggestions.

This research was partially supported by the Research Council of Norway (NFR).

\section{The free nilpotent groups \texorpdfstring{$G(n)$}{G(n)} of class \texorpdfstring{$2$}{2} and rank \texorpdfstring{$n$}{n}}\label{G(n)}

For each natural number $n\geq 2$, let $G(n)$ be the group generated by elements $\{u_i\}_{1\leq i\leq n}$ and $\{v_{jk}\}_{1\leq j<k\leq n}$ subject to the relations
\begin{equation}\label{G(n)-relations}
[v_{jk},v_{lm}]=[u_i,v_{jk}]=1 \quad\text{ and }\quad [u_j,u_k]=v_{jk}
\end{equation}
for $1\leq i\leq n$, $1\leq j<k\leq n$, and $1\leq l<m\leq n$.
Clearly, $G(2)$ is the usual discrete Heisenberg group.
For some purposes, it can be useful to set $G(1)=\langle u_1\rangle\cong\mathbb{Z}$.
Note that $G(n)$ is generated by $n+\frac{1}{2}n(n-1)=\frac{1}{2}n(n+1)$ elements.

The group $G(n)$ is called the free nilpotent group of class $2$ and rank $n$.
Indeed, $G(n)$ is a free object on $n$ generators in the category of nilpotent groups of step at most two.
To see this, note first that $G(n)$ is the group generated by $\{u_i\}_{i=1}^n$ subject to the relations
that all commutators of order greater than two involving the generators are trivial.
Let $G'(n)$ be any other nilpotent group of step at most two and let $\{u_i'\}_{i=1}^n$ be any set of $n$ elements in $G'(n)$.
Then there is a unique homomorphism from $G(n)$ to $G'(n)$ that maps $u_i$ to $u'_i$ for $1\leq i\leq n$.
Of course, every free object on $n$ generators in this category is isomorphic to $G(n)$.
For a more extensive treatment of free nilpotent groups, see the article on Terence Tao's website \cite{Tao} (see also 2.\ in the list below).

Furthermore, we will need the following concrete realization, say $\widetilde{G}(n)$, of $G(n)$.
For $n\geq 2$, we denote the elements of $\widetilde{G}(n)$ by
\begin{displaymath}
r=(r_1,\dotsc,r_n,r_{12},r_{13},\dotsc,r_{n-1,n})\footnote{To be absolutely precise,
the entries with double index are colexicographically ordered, that is, $(i,j)<(k,l)$ if $j<l$ or if $j=l$ and $i<k$.},
\end{displaymath}
where all entries are integers, and define multiplication by
\begin{displaymath}
\begin{split}
r\cdot s=(r_1+s_1,\dotsc,r_n+s_n&,\\ r_{12}+s_{12}+r_1s_2&,r_{13}+s_{13}+r_1s_3,\dotsc,r_{n-1,n}+s_{n-1,n}+r_{n-1}s_n).
\end{split}
\end{displaymath}

By letting $u_i$ have $1$ in the $i$'th spot and $0$ else and $v_{jk}$ have $1$ in the $jk$'th spot and $0$ else,
the relations \eqref{G(n)-relations} are satisfied for these elements.
Next, we define the map
\begin{displaymath}
\widetilde{G}(n)\longrightarrow G(n),\quad r\longmapsto v_{12}^{r_{12}}\cdots v_{n-1,n}^{r_{n-1,n}} \cdot u_n^{r_n}\cdots u_1^{r_1},
\end{displaymath}
and then it is not difficult to see that $\widetilde{G}(n)$ is isomorphic to $G(n)$.
Henceforth, we will not distinguish between $G(n)$ and the realization $\widetilde{G}(n)$ just described, but this should cause no confusion.

Denote by $V(n)$ the subgroup of $G(n)$ generated by the $v_{jk}$'s.
Then $V(n)$ coincides with the center $Z(G(n))$ of $G(n)$ and
\begin{displaymath}
V(n)=Z(G(n))\cong\mathbb{Z}^{\frac{1}{2}n(n-1)}.
\end{displaymath}
Indeed, both this and the next observations follow after noticing that
\begin{displaymath}
r\cdot s\cdot r^{-1}=(s_1,\dotsc,s_n,s_{12}+r_1s_2-s_1r_2,\dotsc,s_{n-1,n}+r_{n-1}s_n-s_{n-1}r_n).
\end{displaymath}
Moreover, consider the subgroups $G(n-1)$ and $H(n)$ of $G(n)$ defined by
\begin{displaymath}
\begin{split}
G(n-1) &= \langle u_i,v_{jk} : 1\leq i\leq n-1, 1\leq j<k\leq n-1 \rangle,\\
H(n) &= \langle u_n,v_{jn} : 1\leq j<n\rangle.
\end{split}
\end{displaymath}
Note that $G(n-1)$ sits inside $G(n)$ as a subgroup and that $H(n)\cong\mathbb{Z}^n$ is a normal subgroup of $G(n)$.
Clearly, we have $G(n)/V(n)\cong\mathbb{Z}^n$ and $G(n)/H(n)\cong G(n-1)$.
Therefore, there are short exact sequences
\begin{displaymath}
\begin{tikzcd}
1 \arrow{r} & V(n) \arrow{r} & G(n) \arrow{r} & \mathbb{Z}^n \arrow{r} & 1
\end{tikzcd}
\end{displaymath}
and
\begin{displaymath}
\begin{tikzcd}
1 \arrow{r} & H(n) \arrow{r} & G(n) \arrow{r} & G(n-1) \arrow{r} & 1
\end{tikzcd}
\end{displaymath}
where the second one splits and the first does not.
In particular, $G(n)$ is a central extension of $\mathbb{Z}^n$ by $\mathbb{Z}^{\frac{1}{2}n(n-1)}$ and consequently, $G(n)$ is a two-step nilpotent group.

\vspace{1em}

To motivate our investigation of $G(n)$, we present a few aspects about these groups and some appearances in the literature.

\begin{enumerate}

\item[1.]
Consider in the first place the \emph{continuous} Heisenberg group.
We will represent this group in two different ways, $G_{\text{matrix}}$ and $G_{\text{wedge}}$,
both with elements $(x,x')=(x_1,x_2,x')\in\mathbb{R}^3$, i.e.\ $x=(x_1,x_2)\in\mathbb{R}^2$, and with multiplication as follows.
For $G_{\text{matrix}}$ we define
\begin{displaymath}
(x_1,x_2,x')(y_1,y_2,y')=\left(x_1+y_1,x_2+y_2,x'+y'+x_1y_2\right),
\end{displaymath}
and for $G_{\text{wedge}}$ we set
\begin{displaymath}
(x_1,x_2,x')(y_1,y_2,y')=\left(x_1+y_1,x_2+y_2,x'+y'+\tfrac{1}{2}(x_1y_2-x_2y_1)\right).
\end{displaymath}
One can deduce that $G_{\text{matrix}}\cong G_{\text{wedge}}$.
To motivate the notation, note that $G_{\text{matrix}}$ can be represented as matrix multiplication in $M_3(\mathbb{R})$ if one identifies
\begin{displaymath}
(x_1,x_2,x')\longleftrightarrow
\begin{bmatrix}
1&x_1&x'\\
0&1&x_2\\
0&0&1
\end{bmatrix},
\end{displaymath}
and that the multiplication in $G_{\text{wedge}}$ may be written as
\begin{displaymath}
(x,x')(y,y')=\left(x+y,x'+y'+\tfrac{1}{2}(x\wedge y)\right).
\end{displaymath}

In general, the wedge product on $\mathbb{R}^n$ is defined as a certain bilinear map (see e.g.\ \cite[p.~79]{Spivak})
\begin{displaymath}
\mathbb{R}^n\times\mathbb{R}^n\to\textstyle\bigwedge\nolimits^2(\mathbb{R}^n),
\end{displaymath}
where $\bigwedge^2(\mathbb{R}^n)$ is a $\frac{1}{2}n(n-1)$-dimensional real vector space.
The elements of $\bigwedge^2(\mathbb{R}^n)$ are called bivectors and if $\{e_i\}_{i=1}^n$ is a basis for $\mathbb{R}^n$,
then $\{e_i\wedge e_j\}_{i<j}$ is a basis for $\bigwedge^2(\mathbb{R}^n)$.
For every $n\geq 2$, define the group $\widehat{G}(n,\mathbb{R})$ with elements
\begin{displaymath}
(x,x')\in\mathbb{R}^n\oplus\textstyle\bigwedge\nolimits^2(\mathbb{R}^n),\quad \text{where }x=(x_1,\dotsc, x_n),\quad x'=(x'_{12},x'_{13},\dotsc,x'_{n-1,n}),
\end{displaymath}
and where multiplication is given by
\begin{displaymath}
(x,x')(y,y')=\left(x+y,x'+y'+\tfrac{1}{2}(x\wedge y)\right).
\end{displaymath}
This group is of dimension $n+\frac{1}{2}n(n-1)=\frac{1}{2}n(n+1)$.
Remark especially that if $n=3$, the wedge product can be identified with the vector cross product on $\mathbb{R}^3$.
That is, the product in $\widehat{G}(3,\mathbb{R})$ is given by
\begin{displaymath}
(x,x')(y,y')=\left(x+y,x'+y'+\tfrac{1}{2}(x\times y)\right).
\end{displaymath}
It is not hard to see that $\widehat{G}(n,\mathbb{R})$ is isomorphic to the group consisting of the same elements,
but with multiplication given by
\begin{equation}\label{order}
(x,x')(y,y')=\left(x+y,x'+y'+(x_1y_2,x_1y_3,\dotsc,x_{n-1}y_n)\right).
\end{equation}
Let $G(n,\mathbb{R})$ denote the group defined by \eqref{order}.
Then $G(n)$ is the integer version of $G(n,\mathbb{R})$.

We also mention that Nielsen \cite{Nielsen} has classified all the six-dimensional connected, simply connected, nilpotent Lie groups.
In this setting, $G(3,\mathbb{R})$ is the group denoted by $G_{6,15}$.

\item[2.]
One may define the free nilpotent group $G(m,n)$ of class $m$ and rank $n$ for every $m\geq\nobreak 1$.
Indeed, $G(m,n)$ is the group generated by $\{u_i\}_{i=1}^n$
subject to the relations that all commutators of order greater than $m$ involving the generators are trivial.
More precisely, for $m=1,2,3$ and $n\geq 2$, we have that $G(m,n)$ can be described as the groups with presentations
\begin{equation}\label{class-3}
\begin{split}
G(1,n)&=\langle \{u_i\}_{i=1}^n : [u_i,u_j]=1\rangle\cong\mathbb{Z}^n,\\
G(2,n)&=\langle \{u_i\}_{i=1}^n : [[u_i,u_j],u_k]=1\rangle=G(n),\\
G(3,n)&=\langle \{u_i\}_{i=1}^n : [[[u_i,u_j],u_k],u_l]=1\rangle,
\end{split}
\end{equation}
and it should now be clear how to define $G(m,n)$ for all $m\geq 1$ and $n\geq 2$.
Finally, we set $G(m,1)=\langle u_1\rangle\cong\mathbb{Z}$ for each $m\geq 1$.
Moreover, for all $m,n\geq 1$, the group $G(m,n)$ is the free object on $n$ generators in the category of nilpotent groups of step at most $m$.
In particular, notice that $G(m,n)$ is $m$-step nilpotent and that
\begin{equation}\label{quotient}
G(m,n)\cong G(m+1,n)/Z(G(m+1,n)).
\end{equation}
Again, we refer to \cite{Tao} for additional details.

In \cite[Section~4]{Milnes-Walters-5}
Milnes and Walters describe the simple quotients of the $C^*$-algebra associated with a five-dimensional group denoted by $H_{5,4}$.
One can check that $H_{5,4}$ is isomorphic to the group $G(3,2)$. See Remark~\ref{conjecture} for more about this group.

\item[3.]
The group $G(3)$ is briefly discussed by Baggett and Packer \cite[Example~4.3]{Baggett-Packer}.
The purpose of that paper is to describe the primitive ideal space of group $C^*$-algebras of some two-step nilpotent groups.
However, $G(3)$ only serves as an example of a group the authors could not handle.

\item[4.]
Let $n\geq 2$.
It is well-known that the group $C^*$-algebra $A=C^*(G(n))$ may be described as
the universal $C^*$-algebra generated by unitaries $\{U_i\}_{1\leq i\leq n}$ and $\{V_{jk}\}_{1\leq j<k\leq n}$ satisfying the relations
\begin{displaymath}
[V_{jk},V_{lm}]=[U_i,V_{jk}]=I \quad\text{ and }\quad [U_j,U_k]=V_{jk}
\end{displaymath}
for all $1\leq i\leq n$, $1\leq j<k\leq n$, and $1\leq l<m\leq n$.

For $\lambda=(\lambda_{12},\lambda_{13},\dotsc,\lambda_{n-1,n})\in\mathbb{T}^{\frac{1}{2}n(n-1)}$, let $\mathcal{A}_{\lambda}$ be the noncommutative $n$-torus.
It is the universal $C^*$-algebra generated by unitaries $\{W_i\}_{i=1}^n$ and relations $[W_i,W_j]=\lambda_{ij}I$ for $1\leq i<j\leq n$.
The universal property of $A$ gives that for each $\lambda$ in $\mathbb{T}^{\frac{1}{2}n(n-1)}$ there is a surjective $^*$-homomorphism
\begin{displaymath}
\pi_{\lambda}\colon A\to\mathcal{A}_{\lambda}
\end{displaymath}
satisfying $\pi_{\lambda}(U_i)=W_i$ for $1\leq i\leq n$ and $\pi_{\lambda}(V_{jk})=\lambda_{jk}I$ for $1\leq j<k\leq n$.

Furthermore, $A$ has center $Z(A)=C^*(\{V_{jk}\}_{1\leq j<k\leq n})\cong C^*(V(n))$.
Indeed, this is the case since $G(n)$ is amenable
and its finite conjugacy classes are precisely the one-point sets of central elements (see Lemma~\ref{center} below).
Therefore, we set
\begin{displaymath}
T=\operatorname{Prim}{Z(A)}\cong\widehat{Z(A)}=\mathbb{T}^{\frac{1}{2}n(n-1)}.
\end{displaymath}
Let $\lambda$ be a primitive ideal of $Z(A)$ identified with an element of $\mathbb{T}^{\frac{1}{2}n(n-1)}$.
Let $\mathcal{I}_{\lambda}$ be the ideal of $A$ generated by $\lambda$, that is, the ideal generated by $\{V_{jk}-\lambda_{jk}I : 1\leq j<k\leq n\}$.
It is clear that $\mathcal{I}_{\lambda}\subseteq\operatorname{ker}{\pi_{\lambda}}$.
By the universal property of $\mathcal{A}_{\lambda}$, there is a $^*$-homomorphism
\begin{displaymath}
\rho\colon\mathcal{A}_{\lambda}\to A/\mathcal{I}_{\lambda}
\end{displaymath}
such that $\rho(W_i)=U_i+\mathcal{I}_{\lambda}$ for $1\leq i\leq n$.
Hence, $\rho\circ\pi_{\lambda}$ coincides with the quotient map $A\to A/\mathcal{I}_{\lambda}$
and consequently, $\operatorname{ker}{\pi_{\lambda}}\subseteq\mathcal{I}_{\lambda}$.
Therefore, $\mathcal{A}_{\lambda}\cong A/\mathcal{I}_{\lambda}$ and $\pi_{\lambda}$ may be regarded as the quotient map $A\to A/\mathcal{I}_{\lambda}$.

For an element $a$ of $A$, let $\tilde{a}$ be the section $T\to\bigsqcup_T\mathcal{A}_{\lambda}$ given by $\tilde{a}(\lambda)=\pi_{\lambda}(a)$
and let $\widetilde{A}=\{\tilde{a}\mid a\in A\}$ be the set of all such sections.
Then the following can be deduced from the Dauns-Hofmann Theorem \cite{Dauns-Hofmann}.

\begin{theorem}\label{continuous-field}
The triple $(T,\{\mathcal{A}_{\lambda}\},\widetilde{A})$ consisting of the base space $T$, $C^*$-algebras $\mathcal{A}_{\lambda}$ for each $\lambda$ in $T$,
and the set of sections $\widetilde{A}$, is a full continuous field of $C^*$-algebras.
Moreover, the $C^*$-algebra associated with this continuous field is naturally isomorphic to $A$.
\end{theorem}

This result may be obtained as a corollary to \cite[Theorem~1.2]{Packer-Raeburn} which employs tools of Williams \cite{Williams} related to Fell bundle theory,
by taking $G=G(n)$ and $\sigma=1$ in that theorem.
It is also a special case of \cite[Corollary~2.3]{Baggett-Packer-2}.
Our proof is more direct and partly inspired by \cite[Theorem~1.1]{Anderson-Paschke} which covers the case where $n=2$.

\vspace{1em}

From the above discussion it now follows that $G(n)$ is a \emph{representation group} for $\mathbb{Z}^n$ in the sense of Moore \cite{Moore-2}.
In this case, that means $G(n)$ is (up to isomorphism) the unique central extension of $\mathbb{Z}^n$ by $\widehat{H^2(\mathbb{Z}^n,\mathbb{T})}$
such that the ordinary irreducible representation theory of $G(n)$ coincides with the projective irreducible representation theory of $\mathbb{Z}^n$.

This fact plays an important role in \cite{ENO},
where the noncommutative principal torus bundles over locally compact spaces are classified up to equivariant Morita equivalence.
As explained in \cite[Section~2]{ENO}, the group $C^*$-algebra of $G(n)$ serves as a ``universal'' bundle in this classification.

We refer to \cite[Section~4]{EW} for more information on representation groups,
where the groups $G(n,\mathbb{R})$ and $G(n)$ are treated particularly in \cite[Example~4.7]{EW}.


\end{enumerate}

\section{The two-cocycles of the free nilpotent groups \texorpdfstring{$G(n)$}{G(n)}}\label{two-cocycles}

Let $G$ be \emph{any} discrete group with identity $e$.
A function $\sigma\colon G\times G\to\mathbb{T}$ satisfying
\begin{gather*}
\sigma(r,s)\sigma(rs,t)=\sigma(r,st)\sigma(s,t)\\
\sigma(r,e)=\sigma(e,r)=1
\end{gather*}
for all elements $r,s,t\in G$ is called a \emph{two-cocycle of $G$ with values in $\mathbb{T}$} (or a \emph{multiplier of $G$}).
Moreover, two two-cocycles $\sigma$ and $\tau$ are said to be \emph{similar}, written $\sigma\sim\tau$, if
\begin{displaymath}
\tau(r,s)=\beta(r)\beta(s)\overline{\beta(rs)}\sigma(r,s)
\end{displaymath}
for all $r,s\in G$ and some function $\beta\colon G\to\mathbb{T}$.
The set of similarity classes of two-cocycles of $G$ is an abelian group under pointwise multiplication.
This group is the second cohomology group $H^2(G,\mathbb{T})$.

\vspace{1em}

Let $G$ be a semidirect product of a normal subgroup $H$ and a subgroup $K$.
By properties of the semidirect product, the elements of $G$ can be uniquely written as products $ab$,
where $a$ belongs to $H$ and $b$ belongs to $K$.
Define the action $\alpha$ of $K$ on $H$ by $\alpha_b(a)=bab^{-1}$.
One often writes $G=H\rtimes_{\alpha} K$,
but to simplify the notation, we will still denote the elements of $G$ by $ab$ instead $(a,b)$
and write the group product in $G$ as $(ab)(a'b')=a\alpha_{b}(a')bb'$ for $a,a'\in H$ and $b,b'\in K$.
Hopefully, the reader is familiar with semidirect products so that this does not cause any confusion.

\vspace{1em}

Next, we apply Mackey's theorem \cite[Theorem 9.4]{Mackey} and obtain the following result.
\begin{theorem}\label{semidirect}
Every two-cocycle of $G$ is similar to a two-cocycle $\sigma$ of $G$ of the form
\begin{equation}\label{cocycle-formula}
\sigma(a'b,ab')=\sigma_{H}(a',\alpha_b(a))g(a,b)\sigma_{K}(b,b'),
\end{equation}
where $\sigma_{H}$ and $\sigma_{K}$ are two-cocycles of $H$ and $K$, respectively,
\begin{displaymath}
g\colon H\times K\to\mathbb{T}
\end{displaymath}
is a function such that $g(a,e)=g(e,b)=1$ for all $a\in H$, $b\in K$,
and $\sigma_{H}$ and $g$ satisfy
\begin{equation}\label{identities}
\begin{split}
g(aa',b)&=\sigma_{H}(\alpha_b(a),\alpha_b(a'))\overline{\sigma_{H}(a,a')}\cdot g(a,b)g(a',b),\\
g(a,bb')&=g(\alpha_{b'}(a),b)g(a,b').
\end{split}
\end{equation}
Moreover, for every choice of $\sigma_{H}$, $g$, and $\sigma_{K}$ satisfying the conditions above, $\sigma$ is a two-cocycle of $G$.
\end{theorem}

\begin{proposition}\label{similarity}
Let $(\sigma_{H},g,\sigma_{K})$ and $(\sigma_{H}',g',\sigma_{K}')$ be triples satisfying the conditions of Theorem~\ref{semidirect} and
let $\sigma$ and $\sigma'$ be the corresponding two-cocycles of $G$.
Then $\sigma\sim\sigma'$ if and only if the following conditions hold:
\begin{enumerate}
\item[(i)] $\sigma_{K}\sim\sigma_{K}'$,
\item[(ii)] There exists a function $\beta\colon H\to\mathbb{T}$ such that
\begin{displaymath}
\begin{split}
\sigma_{H}'(a,a')&=\overline{\beta(a)\beta(a')}\beta(aa')\sigma_{H}(a,a'),\\
g'(a,b)&=\beta(\alpha_b(a))\overline{\beta(a)}g(a,b).
\end{split}
\end{displaymath}
\end{enumerate}
\end{proposition}

\begin{remark}
If (ii) holds, then $\sigma_{H}\sim\sigma_{H}'$.
If $\sigma_{H}\sim\sigma_{H}'$ and $\beta$ and $\beta'$ are two functions implementing the similarity,
then $\beta'=f\cdot\beta$ for some homomorphism $f\colon H\to\mathbb{T}$.
\end{remark}

\begin{proof}[Proof of Proposition~\ref{similarity}]
Suppose $\sigma\sim\sigma'$.
Then there exists some $\gamma\colon G\to\mathbb{T}$ such that
\begin{equation}\label{gamma}
\sigma(a'b,ab')=\gamma(a'b)\gamma(ab')\overline{\gamma(a'bab')}\sigma'(a'b,ab')
\end{equation}
for all $a,a'\in H$ and $b,b'\in K$.
In particular, if $a=a'=e$, then
\begin{displaymath}
\sigma_{K}(b,b')=\gamma(b)\gamma(b')\overline{\gamma(bb')}\sigma_{K}'(b,b')
\end{displaymath}
for all $b,b'\in K$, so $\sigma_{K}\sim\sigma_{K}'$.
Moreover, the formula \eqref{cocycle-formula} from Theorem~\ref{semidirect} with $a=e$ and $b=e$ gives that
\begin{displaymath}
\sigma(a',b')=1=\sigma'(a',b')
\end{displaymath}
for all $a'\in H$ and $b'\in K$.
Applying this fact to \eqref{gamma} shows that $\gamma(a'b')=\gamma(a')\gamma(b')$ for all $a'\in H$ and $b'\in K$.
Define $\beta$ on $H$ by $\beta(a)=\gamma(a)$.
Then, by letting $b=b'=e$ in \eqref{cocycle-formula} and \eqref{gamma}, we get
\begin{displaymath}
\sigma_{H}'(a',a)=\overline{\beta(a')\beta(a)}\beta(a'a)\sigma_{H}(a',a)
\end{displaymath}
for all $a',a\in H$.
Furthermore, by letting $a'=e$ and $b'=e$ in \eqref{cocycle-formula} and \eqref{gamma}, we compute
\begin{displaymath}
\begin{split}
g(a,b)&=\gamma(b)\gamma(a)\overline{\gamma(ba)}g'(a,b)\\
&=\gamma(b)\gamma(a)\overline{\gamma(\alpha_b(a)b)}g'(a,b)\\
&=\gamma(b)\gamma(a)\overline{\gamma(\alpha_b(a))}\overline{\gamma(b)}g'(a,b)\\
&=\gamma(a)\overline{\gamma(\alpha_b(a))}g'(a,b)\\
&=\beta(a)\overline{\beta(\alpha_b(a))}g'(a,b)
\end{split}
\end{displaymath}
for all $a\in H$ and $b\in K$.

Assume next that $\beta$ is such that (ii) holds, and that (i) holds through $\delta$, that is,
\begin{displaymath}
\sigma_{K}(b,b')=\delta(b)\delta(b')\overline{\delta(bb')}\sigma_{K}'(b,b').
\end{displaymath}
Define $\gamma$ on $G$ by $\gamma(ab)=\beta(a)\delta(b)$.
Then
\begin{displaymath}
\begin{split}
\sigma(a'b,ab')&=\sigma_{H}(a',\alpha_b(a))g(a,b)\sigma_{K}(b,b')\\
&=\beta(a')\beta(\alpha_b(a))\overline{\beta(a'\alpha_b(a))}\sigma_{H}'(a',\alpha_b(a))\\
&\quad\cdot\beta(a)\overline{\beta(\alpha_b(a))}g'(a,b)\cdot\delta(b)\delta(b')\overline{\delta(bb')}\sigma_{K}'(b,b')\\
&=\beta(a')\delta(b)\cdot\beta(a)\delta(b')\cdot\overline{\beta(a'\alpha_b(a))\delta(bb')}\sigma'(a'b,ab')\\
&=\gamma(a'b)\gamma(ab')\overline{\gamma(a'bab')}\sigma'(a'b,ab').
\end{split}
\end{displaymath}
\end{proof}

Fix $n\geq 2$.
To compute the two-cocycles of $G(n)$ up to similarity, we will proceed in the following way.
Consider $G(n)$ as the split extension of $G(n-1)$ by $H(n)$ as described in Section~\ref{G(n)}.
We will identify the elements
\begin{displaymath}
\begin{split}
a&=(0,\dotsc,0,a_n,0,\dotsc,0,a_{1n},\dotsc,a_{n-1,n}),\\
b&=(b_1,\dotsc,b_{n-1},0,b_{12},\dotsc,b_{n-2,n-1},0,\dotsc,0),
\end{split}
\end{displaymath}
of $H(n)$ and $G(n-1)$, respectively, with ones of the form
\begin{displaymath}
\begin{split}
a&\longleftrightarrow (a_n,a_{1n},\dotsc,a_{n-1,n}),\\
b&\longleftrightarrow (b_1,\dotsc,b_{n-1},b_{12},\dotsc,b_{n-2,n-1}).
\end{split}
\end{displaymath}
The elements of $G(n)$ will be written as products $ab$,
where $a$ belongs to $H(n)$ and $b$ belongs to $G(n-1)$,
and the action $\alpha$ of $G(n-1)$ on $H(n)$ is then given by
\begin{displaymath}
\alpha_b(a)=bab^{-1}=(a_n,a_{1n}+b_1a_n,\dotsc,a_{n-1,n}+b_{n-1}a_n).
\end{displaymath}

\begin{remark}
In the published version, Theorem~\ref{semidirect} and Proposition~\ref{similarity} were only shown to hold for $G(n)$,
not for any semidirect product.

Proposition~\ref{similarity} can be deduced from \cite[Appendix~2]{Packer-Raeburn},
but in any case it may be useful to give a proof by a direct computation.
\end{remark}

Let $\tau_n$ be a two-cocycle of $G(n)$ coming from a pair $(\sigma_{H(n)},g_n)$, that is,
\begin{equation}\label{tilde-formula}
\tau_n(a'b,ab')=\sigma_{H(n)}(a',\alpha_b(a))g_n(a,b),
\end{equation}
where $(\sigma_{H(n)},g_n)$ satisfies \eqref{identities}.
By Theorem~\ref{semidirect} and Proposition~\ref{similarity}, every two-cocycle of $G(n)$ that is trivial on $G(n-1)$ is similar to one of this form.
Denote the abelian group of similarity classes of two-cocycles of this type by $\widetilde{H}^2(G(n),\mathbb{T})$.

\begin{corollary}\label{inductively}
The second cohomology group of $G(n)$ may be decomposed as
\begin{displaymath}
H^2(G(n),\mathbb{T})=\widetilde{H}^2(G(n),\mathbb{T})\oplus H^2(G(n-1),\mathbb{T})=\bigoplus_{k=2}^n\widetilde{H}^2(G(k),\mathbb{T}).
\end{displaymath}
\end{corollary}

\begin{proof}
It follows from Theorem~\ref{semidirect} and Proposition~\ref{similarity} (see our comment above) that
\begin{displaymath}
H^2(G(n),\mathbb{T})=\widetilde{H}^2(G(n),\mathbb{T})\oplus H^2(G(n-1),\mathbb{T}).
\end{displaymath}
Thus, the second inequality is proven by induction after noticing that
\begin{displaymath}
\{1\}=H^2(\mathbb{Z},\mathbb{T})=H^2(G(1),\mathbb{T})=\widetilde{H}^2(G(1),\mathbb{T}).
\qedhere
\end{displaymath}
\end{proof}

\begin{theorem}\label{main-theorem}
We have
\begin{displaymath}
H^2(G(n),\mathbb{T})\cong\mathbb{T}^{\frac{1}{3}(n+1)n(n-1)},
\end{displaymath}
and for each set of
$\frac{1}{3}(n+1)n(n-1)$ parameters
\begin{displaymath}
\{\lambda_{i,jk} : 1\leq i\leq k, 1\leq j<k\leq n\}\subseteq\mathbb{T},
\end{displaymath}
the associated $[\sigma]$ in $H^2(G(n),\mathbb{T})$ may be represented by
\begin{equation}\label{main-formula}
\begin{split}
\sigma(r,s)&=\prod_{i<j<k}\lambda_{i,jk}^{s_{jk}r_i+s_kr_{ij}}\lambda_{j,ik}^{s_{ik}r_j+s_k(r_ir_j-r_{ij})}\\
&\quad\cdot\prod_{j<k}\lambda_{j,jk}^{s_{jk}r_j+\frac{1}{2}s_kr_j(r_j-1)}\lambda_{k,jk}^{r_k(s_{jk}+r_js_k)+\frac{1}{2}r_js_k(s_k-1)}.
\end{split}
\end{equation}
\end{theorem}

The proof of this theorem will be given in Section~\ref{main-proof}.

See the paragraph following Theorem~\ref{universal} for an explanation of why $\lambda_{i,jk}$ for $i>k$ is not involved in the above.

\begin{example}
For $G(1)\cong\mathbb{Z}$ there are no nontrivial two-cocycles.
The two-cocycles of the usual Heisenberg group $G(2)$ are, up to similarity, given by two parameters (as computed in \cite[Proposition~1.1]{Packer-H}):
\begin{equation}\label{G(2)-formula}
\sigma(r,s)=\lambda_{1,12}^{s_{12}r_1+\frac{1}{2}s_2r_1(r_1-1)}\lambda_{2,12}^{r_2(s_{12}+r_1s_2)+\frac{1}{2}r_1s_2(s_2-1)}
\end{equation}
The two-cocycles of $G(3)$ are, up to similarity, given by eight parameters:
\begin{displaymath}
\begin{split}
\sigma(r,s)&=\lambda_{1,23}^{s_{23}r_1+s_3r_{12}}\lambda_{2,13}^{s_{13}r_2+s_3(r_1r_2-r_{12})}\\
&\quad\cdot\lambda_{1,12}^{s_{12}r_1+\frac{1}{2}s_2r_1(r_1-1)}\lambda_{2,12}^{r_2(s_{12}+r_1s_2)+\frac{1}{2}r_1s_2(s_2-1)}\\
&\quad\cdot\lambda_{1,13}^{s_{13}r_1+\frac{1}{2}s_3r_1(r_1-1)}\lambda_{3,13}^{r_3(s_{13}+r_1s_3)+\frac{1}{2}r_1s_3(s_3-1)}\\
&\quad\cdot\lambda_{2,23}^{s_{23}r_2+\frac{1}{2}s_3r_2(r_2-1)}\lambda_{3,23}^{r_3(s_{23}+r_2s_3)+\frac{1}{2}r_2s_3(s_3-1)}
\end{split}
\end{displaymath}
\end{example}

\begin{remark}\label{homology}
One may associate a Lyndon-Hochschild-Serre spectral sequence with the extension (see e.g.\ \cite[6.8.2]{Weibel}):
\begin{displaymath}
\begin{tikzcd}
1 \arrow{r} & V(n) \arrow{r} & G(n) \arrow{r} & \mathbb{Z}^n \arrow{r} & 1
\end{tikzcd}
\end{displaymath}
By applying \cite[Theorem 4]{Kuzmin-Semenov} to this sequence, one can compute the second homology group of $G(n)$
(which is recently also done more generally for $G(m,n)$ in \cite[Proposition~2.1]{Szymik}),
and deduce that
\begin{displaymath}
H_2(G(n),\mathbb{Z})\cong\mathbb{Z}^{\frac{1}{3}(n+1)n(n-1)},
\end{displaymath}
which gives that $H^2(G(n),\mathbb{T})\cong\mathbb{T}^{\frac{1}{3}(n+1)n(n-1)}$ after dualizing,
using the universal coefficient theorem for cohomology.
However, this does not give an explicit description of $H^2(G(n),\mathbb{T})$.
\end{remark}

\subsection{Proof of \texorpdfstring{Theorem~\ref{main-theorem}}{Theorem~2.6}}\label{main-proof}

We will in this proof first compute $\widetilde{H}^2(G(n),\mathbb{T})$ through several lemmas
and then use Corollary~\ref{inductively} to conclude the argument.

\begin{lems}\label{lemma-Hn}
Every element of $\widetilde{H}^2(G(n),\mathbb{T})$ may be represented by a pair $(\sigma_{H(n)},g_n)$,
where $\sigma_{H(n)}$ is a two-cocycle of $H(n)$ given by
\begin{equation}\label{sigma-Hn}
\sigma_{H(n)}(a',a)=\prod_{i=1}^{n-1}\lambda_i^{a'_na_{in}}
\end{equation}
for some $\lambda_1,\dotsc,\lambda_{n-1}\in\mathbb{T}$, and $g_n$ satisfies
\begin{equation}\label{g-Hn}
g_n(a+a',b)=\Big(\prod_{i=1}^{n-1}\lambda_i^{b_ia_na'_n}\Big)g_n(a,b)g_n(a',b)
\end{equation}
for all $a,a'\in H(n)$ and $b\in G(n-1)$.
\end{lems}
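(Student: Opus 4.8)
The plan is to prove the two assertions separately: first normalise $\sigma_{H(n)}$ to the shape \eqref{sigma-Hn}, and then read off \eqref{g-Hn} from the first line of \eqref{identities}. Throughout I identify $H(n)\cong\mathbb{Z}^n$ with free generators $u_n,v_{1n},\dots,v_{n-1,n}$, so that an element $a$ has coordinate $a_n$ along $u_n$ and coordinates $a_{in}$ along the $v_{in}$.

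First I would reduce $\sigma_{H(n)}$ to a bicharacter. Since $H(n)$ is free abelian, every multiplier on it is cohomologous to a bicharacter, its class being determined by the (antisymmetric) commutator pairing. By Proposition~\ref{similarity}(ii) the coboundary realising this reduction is an admissible change of representative of the pair $(\sigma_{H(n)},g_n)$, the matching adjustment of $g_n$ being the one prescribed there. Hence I may assume
\[
\sigma_{H(n)}(a',a)=\prod_{i=1}^{n-1}\lambda_i^{\,a'_n a_{in}}\prod_{1\le i<j\le n-1}\nu_{ij}^{\,a'_{in}a_{jn}},
\]
where $\lambda_i$ is the commutator pairing of $u_n$ with $v_{in}$ and $\nu_{ij}$ that of $v_{in}$ with $v_{jn}$. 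The entire content of the normalisation is then to show that the first identity of \eqref{identities} forces every $\nu_{ij}=1$.

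This invariance constraint is the crux. Fix $b$ and regard $a\mapsto g_n(a,b)$ as a $\mathbb{T}$-valued function on $\mathbb{Z}^n$; the first line of \eqref{identities} says its coboundary equals $F_b(a,a'):=\sigma_{H(n)}(\alpha_b a,\alpha_b a')\,\overline{\sigma_{H(n)}(a,a')}$. Such a function exists exactly when $F_b$ is symmetric in $(a,a')$: symmetry is necessary because both $g_n(a+a',b)$ and $g_n(a,b)g_n(a',b)$ are symmetric, and it is sufficient because a symmetric bicharacter on $\mathbb{Z}^n$ is a coboundary (as $\mathbb{T}$ is divisible). But symmetry of $F_b$ is precisely the assertion that the commutator pairing $\omega$ of $\sigma_{H(n)}$ is $\alpha_b$-invariant. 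Since $\alpha_b$ fixes each $v_{in}$ and sends $u_n\mapsto u_n+\sum_m b_m v_{mn}$, the pairing $\omega(v_{in},v_{jn})=\nu_{ij}$ is automatically invariant, whereas
\[
\omega(\alpha_b u_n,\alpha_b v_{kn})=\lambda_k\prod_{m}\nu_{mk}^{\,b_m},\qquad \nu_{mk}:=\omega(v_{mn},v_{kn}).
\]
As $b_1,\dots,b_{n-1}$ range over all of $\mathbb{Z}^{n-1}$, invariance forces $\nu_{mk}=1$ for all $m\ne k$, i.e. all $\nu_{ij}=1$, and $\sigma_{H(n)}$ takes the form \eqref{sigma-Hn}.

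Finally, with $\sigma_{H(n)}$ as in \eqref{sigma-Hn}, I would substitute the explicit action $\alpha_b(a)=(a_n,a_{1n}+b_1a_n,\dots,a_{n-1,n}+b_{n-1}a_n)$ into $F_b$; the terms in $a_n a'_{in}$ cancel against $\overline{\sigma_{H(n)}(a,a')}$, leaving $F_b(a,a')=\prod_i\lambda_i^{\,b_i a_n a'_n}$, so that the first line of \eqref{identities} becomes exactly \eqref{g-Hn}. This last step is a routine cancellation, as is the standard reduction to a bicharacter in the first step; the one genuinely substantive point is the symmetry/invariance argument, which is what selects the special shape of \eqref{sigma-Hn} in which only the $u_n$--$v_{in}$ pairings survive.
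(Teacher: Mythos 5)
Your proposal is correct and follows essentially the same route as the paper: reduce $\sigma_{H(n)}$ to the standard bicharacter normal form on $\mathbb{Z}^n$ (the paper cites Backhouse for this, and likewise uses Proposition~\ref{similarity} to make the reduction an admissible change of the pair), then use the fact that the left side of the first identity in \eqref{identities} is symmetric in $a,a'$ to kill the $v$--$v$ pairings, and finally substitute to obtain \eqref{g-Hn}. Your rephrasing of the symmetry constraint as $\alpha_b$-invariance of the commutator pairing, checked on generators, is just a cleaner packaging of the paper's explicit expansion of $\sigma_{H(n)}(\alpha_b(a),\alpha_b(a'))\overline{\sigma_{H(n)}(a,a')}$ and its invariance under $a\leftrightarrow a'$, so the two arguments coincide in substance.
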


\begin{proof}
Every element of $\widetilde{H}^2(G(n),\mathbb{T})$ may be represented by a two-cocycle of the form \eqref{tilde-formula},
that is, by a pair $(\sigma_{H(n)},g_n)$ satisfying \eqref{identities}.

Moreover, it is well-known (see e.g.\ \cite{Backhouse}) that every two-cocycle of $H(n)\cong\mathbb{Z}^n$ is similar to one of the form
\begin{displaymath}
\sigma_{H(n)}(a',a)=\prod_{1\leq i\leq n-1}\lambda_i^{a'_na_{in}}\cdot\prod_{1\leq j<k\leq n-1}\mu_{jk}^{a'_{jn}a_{kn}}
\end{displaymath}
for some sets of scalars $\{\lambda_i\}_{1\leq i\leq n-1},\{\mu_{jk}\}_{1\leq j<k\leq n-1}\subseteq\mathbb{T}$.
Since $H(n)$ is abelian, \eqref{identities} gives that
\begin{displaymath}
\begin{split}
\sigma_{H(n)}(\alpha_b(a),\alpha_b(a'))\overline{\sigma_{H(n)}(a,a')}&=g_n(a+a',b)\overline{g_n(a,b)g_n(a',b)}\\
&=g_n(a'+a,b)\overline{g_n(a',b)g_n(a,b)}\\
&=\sigma_{H(n)}(\alpha_b(a'),\alpha_b(a))\overline{\sigma_{H(n)}(a',a)}
\end{split}
\end{displaymath}
for all $a,a'\in H(n)$ and $b\in G(n-1)$.
Furthermore, we have
\begin{displaymath}
\begin{split}
\sigma_H(\alpha_b(a)&,\alpha_b(a'))\overline{\sigma_H(a,a')}\\
=&\prod_{1\leq i\leq n-1}\lambda_i^{a_n(a'_{in}+b_ia'_n)-a_na'_{in}}\cdot\prod_{1\leq j<k\leq n-1}\mu_{jk}^{(a_{jn}+b_ja_n)(a'_{kn}+b_ka'_n)-a_{jn}a'_{kn}}\\
=&\prod_{1\leq i\leq n-1}\lambda_i^{b_ia_na'_n}\cdot\prod_{1\leq j<k\leq n-1}\mu_{jk}^{b_ja'_{kn}a_n+b_ka_{jn}a'_n+b_jb_ka_na'_n}.
\end{split}
\end{displaymath}
This is equal to $\sigma_H(\alpha_b(a'),\alpha_b(a))\overline{\sigma_H(a',a)}$ for all $a,a'\in H(n)$ and $b\in G(n-1)$ if and only if
the expression remains unchanged under the substitution $a\longleftrightarrow a'$, that is, if and only if all the $\mu_{jk}$'s are $1$.
\end{proof}

\begin{lems}
For every element of $\widetilde{H}^2(G(n),\mathbb{T})$ there is a \emph{unique} associated pair $(\sigma_{H(n)},g_n)$
satisfying the conditions of Lemma~\ref{lemma-Hn} such that
\begin{equation}\label{g-trivial}
g_n(u_n,u_i)=1 \quad\text{for all}\quad 1\leq i\leq n-1.
\end{equation}
\end{lems}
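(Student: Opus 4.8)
The plan is to use the description of similarities from Proposition~\ref{similarity}, exploiting the remaining freedom in the auxiliary function to normalize $g_n$ on the finitely many pairs $(u_n,u_i)$, and then to check that this freedom is matched exactly by the $n-1$ conditions in \eqref{g-trivial}.

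First I would pin down what freedom is actually available. Suppose two pairs $(\sigma_{H(n)},g_n)$ and $(\sigma_{H(n)}',g_n')$, both satisfying the conditions of Lemma~\ref{lemma-Hn}, represent the same class in $\widetilde{H}^2(G(n),\mathbb{T})$; by Proposition~\ref{similarity}(ii) there is some $\beta\colon H(n)\to\mathbb{T}$ implementing the similarity. Both $\sigma_{H(n)}$ and $\sigma_{H(n)}'$ are then triangular bicharacters of the form \eqref{sigma-Hn}, while the coboundary factor $\overline{\beta(a)\beta(a')}\beta(a+a')$ relating them has trivial commutator pairing. Comparing the pairing values $\sigma_{H(n)}(u_n,v_{in})\overline{\sigma_{H(n)}(v_{in},u_n)}=\lambda_i$ forces the two parameter sets to coincide, so $\sigma_{H(n)}'=\sigma_{H(n)}$ and hence $\overline{\beta(a)\beta(a')}\beta(a+a')=1$; that is, $\beta$ must be a homomorphism. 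Thus the admissible similarities among representatives satisfying Lemma~\ref{lemma-Hn} are exactly those implemented by homomorphisms $\beta\colon H(n)\to\mathbb{T}$, and the only remaining freedom is the choice of such a $\beta$.

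Next I would compute the effect of such a $\beta$ on the distinguished values. Writing $\alpha_{u_i}(u_n)=u_n+v_{in}$ in the abelian group $H(n)$ and using that $\beta$ is a homomorphism, the rule $g_n'(a,b)=\beta(\alpha_b(a))\overline{\beta(a)}g_n(a,b)$ specializes to
\begin{displaymath}
g_n'(u_n,u_i)=\beta(v_{in})\,g_n(u_n,u_i),\qquad 1\leq i\leq n-1.
\end{displaymath}
Since $u_n,v_{1n},\dotsc,v_{n-1,n}$ form a free basis of $H(n)\cong\mathbb{Z}^n$, the values $\beta(v_{1n}),\dotsc,\beta(v_{n-1,n})$ may be prescribed independently; setting $\beta(v_{in})=\overline{g_n(u_n,u_i)}$ (and, say, $\beta(u_n)=1$) produces a representative with $g_n'(u_n,u_i)=1$ for all $i$, which gives existence.

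For uniqueness, suppose $(\sigma_{H(n)},g_n)$ and $(\sigma_{H(n)},g_n^{\flat})$ both satisfy \eqref{g-trivial} and represent the same class, related by a homomorphism $\beta$ as above. The displayed identity gives $1=\beta(v_{in})\cdot 1$, so $\beta(v_{in})=1$ for all $i$. The key point is that the correction factor depends only on these values: from $\alpha_b(a)-a=\sum_{j=1}^{n-1}b_j a_n\,v_{jn}$ one obtains
\begin{displaymath}
\frac{g_n^{\flat}(a,b)}{g_n(a,b)}=\beta(\alpha_b(a))\overline{\beta(a)}=\prod_{j=1}^{n-1}\beta(v_{jn})^{b_j a_n}=1
\end{displaymath}
for all $a\in H(n)$ and $b\in G(n-1)$, whence $g_n^{\flat}=g_n$. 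The main thing to get right is precisely this reduction to homomorphisms together with the observation that the surviving freedom is carried entirely by $\beta(v_{1n}),\dotsc,\beta(v_{n-1,n})$, matched exactly by the $n-1$ normalization conditions; the leftover parameter $\beta(u_n)$ drops out of $g_n'$ and so causes no loss of uniqueness.
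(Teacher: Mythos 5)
Your proof is correct and takes essentially the same route as the paper: existence by twisting $g_n$ with a homomorphism $f\colon H(n)\to\mathbb{T}$ satisfying $f(v_{in})=\overline{g_n(u_n,u_i)}$, and uniqueness by reducing, via Proposition~\ref{similarity} and the remark following it, to a homomorphism $\beta$ that must vanish on the $v_{in}$'s, so the correction factor $\prod_{i=1}^{n-1}\beta(v_{in})^{a_nb_i}$ is identically $1$. Your explicit verification that $\sigma_{H(n)}'=\sigma_{H(n)}$ (via invariance of the antisymmetrized pairing under coboundaries) merely spells out a step the paper asserts tersely.
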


\begin{proof}
Suppose that $(\sigma_{H(n)},g_n)$ satisfies \eqref{sigma-Hn} and \eqref{g-Hn}.
Let $f\colon H(n)\to\mathbb{T}$ be the homomorphism determined by $f(u_n)=1$ and $f(v_{in})=\overline{g_n(u_n,u_i)}$ for all $1\leq i\leq n-1$
and define $g_n'$ by $g_n'(a,b)=f(\alpha_b(a))\overline{f(a)}g_n(a,b)$.
Then, $g_n'(u_n,u_i)=1$ for all $1\leq i\leq n-1$ and by Proposition~\ref{similarity},
$(\sigma_{H(n)},g_n')$ determines a two-cocycle of $H(n)$ in the same similarity class as the one coming from $(\sigma_{H(n)},g_n)$.

Suppose now that there are two pairs $(\sigma_{H(n)},g_n)$ and $(\sigma_{H(n)}',g_n')$ both satisfying the conditions of Lemma~\ref{lemma-Hn}.
Then $\sigma_{H(n)}'=\sigma_{H(n)}$, so by Proposition~\ref{similarity} and the succeeding remark,
there is a homomorphism $f\colon H(n)\to\mathbb{T}$ such that
\begin{displaymath}
g_n'(a,b)=f(\alpha_b(a))\overline{f(a)}g_n(a,b)=\Big(\prod_{i=1}^{n-1}f(v_{in})^{a_nb_i}\Big)g_n(a,b)
\end{displaymath}
for all $a\in H(n)$ and $b\in G(n-1)$.
In particular,
\begin{displaymath}
g_n'(u_n,u_i)=f(v_{in})g_n(u_n,u_i)\quad\text{for all}\quad 1\leq i\leq n-1,
\end{displaymath}
so that $g_n'=g_n$ if $g_n'(u_n,u_i)=g_n(u_n,u_i)$ for all $1\leq i\leq n-1$.
\end{proof}

In the forthcoming lemmas we fix an element of $\widetilde{H}^2(G(n),\mathbb{T})$, and let $(\sigma_{H(n)},g)$ be the unique associated pair
satisfying \eqref{sigma-Hn}, \eqref{g-Hn}, and \eqref{g-trivial} for some set of scalars $\{\lambda_i\}_{i=1}^{n-1}\subseteq\mathbb{T}$.

For computational reasons, we introduce the following notation.
For $a=(a_n,a_{1n},\dotsc,a_{n-1,n})$ in $H(n)$, we write $a=w(a)+z(a)$, where $w(a)=(a_n,0,\dotsc,0)$,
and $z(a)$ is the ``central part'', i.e.\ $z(a)=(0,a_{1n},\dotsc,a_{n-1,n})$.
Similarly, for $b=(b_1,\dotsc,b_{n-1},b_{12},\dotsc,b_{n-2,n-1})$ in $G(n-1)$, we write $b=w(b)z(b)$,
where $w(b)=(b_1,\dotsc,b_{n-1},0,\dotsc,0)$ and $z(b)=(0,\dotsc,0,b_{12},\dotsc,b_{n-2,n-1})$.
Note that $\alpha_b(a)=a$ if either $w(a)$ or $w(b)$ is trivial, i.e.\ if either $a$ or $b$ is central.

\begin{lems}\label{decomposition}
For all $a\in H(n)$ and $b\in G(n-1)$ we have
\begin{displaymath}
g(a,b)=g(w(a),w(b))g(w(a),z(b))g(z(a),w(b)).
\end{displaymath}
\end{lems}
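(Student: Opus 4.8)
The plan is to decompose $g$ separately in each of its two variables and then show that the resulting ``doubly central'' cross term is trivial. Writing $a=w(a)+z(a)$ in $H(n)$ and $b=w(b)z(b)$ in $G(n-1)$, I expect $g(a,b)$ to factor into four pieces, three of which are exactly those appearing in the statement, while the fourth, $g(z(a),z(b))$, will vanish.

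First I would split the first variable. Applying \eqref{g-Hn} to the pair $w(a),z(a)$, the correction factor $\prod_{i=1}^{n-1}\lambda_i^{b_i\,(w(a))_n\,(z(a))_n}$ is trivial because the $n$-th coordinate of $z(a)$ vanishes; hence $g(a,b)=g(w(a),b)\,g(z(a),b)$. Next I would split the second variable. Since $z(b)$ lies in the center $V(n-1)$ of $G(n-1)$, the remark preceding the lemma gives $\alpha_{z(b)}(c)=c$ for every $c\in H(n)$, so the second identity of \eqref{identities} applied with $b\mapsto w(b)$ and $b'\mapsto z(b)$ (note $w(b)z(b)=b$) yields $g(c,b)=g(c,w(b))\,g(c,z(b))$. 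Using this for $c=w(a)$ and $c=z(a)$ and combining with the first splitting produces
\begin{displaymath}
g(a,b)=g(w(a),w(b))\,g(w(a),z(b))\,g(z(a),w(b))\,g(z(a),z(b)).
\end{displaymath}

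The main point, and the only real obstacle, is to verify that the cross term $g(z(a),z(b))$ equals $1$. Here $z(a)$ is central in $H(n)$, so $\alpha_{b'}(z(a))=z(a)$ for all $b'$, and the second identity of \eqref{identities} collapses to $g(z(a),bb')=g(z(a),b)\,g(z(a),b')$; that is, $b\mapsto g(z(a),b)$ is a homomorphism from $G(n-1)$ into the abelian group $\mathbb{T}$. Any such homomorphism kills commutators, and each generator $v_{jk}$ occurring in $z(b)$ satisfies $v_{jk}=[u_j,u_k]$ in $G(n-1)$, so $g(z(a),v_{jk})=1$; since $z(b)$ is a product of the $v_{jk}$, we conclude $g(z(a),z(b))=1$, which finishes the proof.
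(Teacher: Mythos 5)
Your proof is correct, and the overall decomposition is the one the paper uses: both split the first variable via \eqref{g-Hn} (the correction factor dies because $z(a)$ has vanishing $n$-th coordinate) and the second variable via the second identity of \eqref{identities} with the central $z(b)$ in the second slot, reducing everything to showing $g(z(a),z(b))=1$. Where you genuinely diverge is in killing this cross term. The paper compares $g(a,bb')$ with $g(a,b'b)$ for central $b'$ (its identity \eqref{central-b}), which together with \eqref{w(a)-trivial} yields $g((0,b_1a_n,\dotsc,b_{n-1}a_n),b')=1$, and then lets $a$ and $b$ vary to exhaust $z(H(n))$; this exploits the explicit unipotent form $\alpha_b(a)=a+(0,b_1a_n,\dotsc,b_{n-1}a_n)$ of the action. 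You instead observe that since $\alpha_{b'}(z(a))=z(a)$ for all $b'$, the map $b\mapsto g(z(a),b)$ is a homomorphism $G(n-1)\to\mathbb{T}$, hence trivial on commutators, and $z(b)$ is a product of the $v_{jk}=[u_j,u_k]$. Your mechanism is more conceptual and more portable: it needs only that the central part of $G(n-1)$ lies in the commutator subgroup (true here because $G(n-1)$ is free nilpotent of class two), not the coordinate formula for $\alpha$, whereas the paper's computation produces along the way the explicit vanishing $g(z(\tilde a),b')=1$ for every central $b'$, bookkeeping that feeds naturally into the bihomomorphism arguments of the subsequent lemma. One cosmetic slip: in your last step you justify $\alpha_{b'}(z(a))=z(a)$ by saying $z(a)$ is central in $H(n)$; since $H(n)$ is abelian that says nothing --- the correct reason, which you invoke correctly elsewhere, is that $w(z(a))$ is trivial, i.e.\ $z(a)$ is central in $G(n)$.
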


\begin{proof}
It follows immediately from Lemma~\ref{lemma-Hn} that if $a,a'\in H(n)$ and $w(a)$ or $w(a')$ is $0$, then
\begin{equation}\label{w(a)-trivial}
g(a+a',b)=g(a,b)g(a',b),
\end{equation}
hence,
\begin{displaymath}
g(a,b)=g(w(a)+z(a),b)=g(w(a),b)g(z(a),b)
\end{displaymath}
for all $a\in H(n)$ and $b\in G(n-1)$.
If $b'\in G(n-1)$ and $w(b')=e$, then $b'$ is central and $\alpha_{b'}(a)=a$ for all $a\in H(n)$.
Therefore,
\begin{equation}\label{central-b}
g(a,b)g(a,b')=g(a,bb')=g(a,b'b)=g(\alpha_{b}(a),b')g(a,b)
\end{equation}
for all $a\in H(n)$, $b\in G(n-1)$.
By \eqref{w(a)-trivial}, we then get
\begin{displaymath}
\begin{split}
1&=g(\alpha_{b}(a),b')\overline{g(a,b')}\\
&=g(a+(0,b_1a_n,\dotsc,b_{n-1}a_n),b')\overline{g(a,b')}\\
&=g(a,b')g((0,b_1a_n,\dotsc,b_{n-1}a_n),b')\overline{g(a,b')}\\
&=g((0,b_1a_n,\dotsc,b_{n-1}a_n),b')
\end{split}
\end{displaymath}
for all $a\in H(n)$ and $b\in G(n-1)$.
Consequently, since this holds for all $a\in H(n)$ and $b\in G(n-1)$, and central $b'\in G(n-1)$,
we get that if $\tilde{a}$ and $\tilde{b}$ are \emph{any} elements in $H(n)$ and $G(n-1)$, respectively, then $g(z(\tilde{a}),z(\tilde{b}))=1$.
Moreover, \eqref{central-b} also imply that if $b,b'\in G(n-1)$ and \emph{either} $w(b)$ \emph{or} $w(b')$ is equal to $e$,
that is, either $b$ or $b'$ is central, then
\begin{equation}\label{w(b)-trivial}
g(a,bb')=g(a,b)g(a,b').
\end{equation}
Hence, by \eqref{w(b)-trivial} and \eqref{w(a)-trivial},
\begin{displaymath}
\begin{split}
g(a,b)&=g(a,w(b)z(b))=g(a,w(b))g(a,z(b))\\
      &=g(w(a),w(b))g(z(a),w(b))g(w(a),z(b))\cdot 1
\end{split}
\end{displaymath}
for all $a\in H(n)$ and $b\in G(n-1)$.
\end{proof}

\begin{lems}\label{z-part}
For all $a\in H(n)$ and $b,b'\in G(n-1)$ we have
\begin{displaymath}
\begin{split}
g(z(a),w(b))&=\prod_{i,j=1}^{n-1} g(v_{in},u_j)^{a_{in}b_j},\\
g(w(a),z(b))&=\prod_{1\leq i<j\leq n} g(u_n,v_{ij})^{a_nb_{ij}}=\prod_{1\leq i<j\leq n}\Big(\overline{g(v_{in},u_j)}g(v_{jn},u_i)\Big)^{a_nb_{ij}},
\end{split}
\end{displaymath}
and
\begin{equation}\label{b-homomorphism}
g(a,bb')=\Big(\prod_{i,j=1}^{n-1}g(v_{in},u_j)^{b_i'b_ja_n}\Big)g(a,b)g(a,b').
\end{equation}
\end{lems}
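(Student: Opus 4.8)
The plan is to reduce each of the three formulas to the values of $g$ on pairs of generators, exploiting that $g$ is ``bi-additive up to the explicit $\lambda$-correction'' of \eqref{g-Hn} together with the cocycle identity $g(a,bb')=g(\alpha_{b'}(a),b)g(a,b')$ from \eqref{identities}. The essential simplification throughout is that whenever the \emph{second} argument is central the $\lambda$-correction in \eqref{g-Hn} disappears (its exponent contains the linear coordinates $b_i$, which vanish for central $b$), so additivity in the first slot is exact; and whenever the \emph{first} argument is central, say equal to some $v_{in}$, the map $\alpha_{b'}$ fixes it, so \eqref{identities} shows $g(v_{in},\cdot)$ is a genuine homomorphism on $G(n-1)$. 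I shall also use repeatedly the two facts extracted in the proof of Lemma~\ref{decomposition}: the additivity \eqref{w(a)-trivial} in the first slot when one summand is central, and the vanishing $g(\text{central},\text{central})=1$.

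For the first formula I write $z(a)=\sum_i a_{in}v_{in}$ and apply \eqref{w(a)-trivial} to pull out the exponents $a_{in}$, reducing to $g(v_{in},w(b))$; since $v_{in}$ is central, $g(v_{in},\cdot)$ is multiplicative, and writing $w(b)$ as $u_1^{b_1}\cdots u_{n-1}^{b_{n-1}}$ times a central factor (which $g(v_{in},\cdot)$ kills) gives $g(v_{in},w(b))=\prod_j g(v_{in},u_j)^{b_j}$. This yields the claimed product over $i,j$. The first equality of the second formula is entirely parallel: $w(a)=u_n^{a_n}$, exact additivity in the first slot extracts the exponent $a_n$, and $g(u_n,\cdot)$ is multiplicative on the centre (where $\alpha$ acts trivially), so $g(u_n,z(b))=\prod_{i<j}g(u_n,v_{ij})^{b_{ij}}$ (the terms with $j=n$ are absent because $b\in G(n-1)$).

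The main obstacle is the second equality of the middle line, $g(u_n,v_{ij})=\overline{g(v_{in},u_j)}\,g(v_{jn},u_i)$, which is where the group relation $[u_i,u_j]=v_{ij}$ finally enters. I will compute $g(u_n,u_iu_j)$ in two ways. Directly, \eqref{identities} with $a=u_n,\,b=u_i,\,b'=u_j$ together with $\alpha_{u_j}(u_n)=u_nv_{jn}$ and the normalization $g(u_n,u_i)=g(u_n,u_j)=1$ from \eqref{g-trivial} collapses to $g(u_n,u_iu_j)=g(v_{jn},u_i)$; the swapped computation gives $g(u_n,u_ju_i)=g(v_{in},u_j)$. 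On the other hand, substituting $u_iu_j=v_{ij}u_ju_i$ and applying \eqref{identities} with the central element $v_{ij}$ in the middle, one finds $\alpha_{u_ju_i}(u_n)=u_nv_{in}v_{jn}$, and exact additivity together with $g(\text{central},\text{central})=1$ reduce $g(\alpha_{u_ju_i}(u_n),v_{ij})$ to $g(u_n,v_{ij})$; hence $g(u_n,u_iu_j)=g(u_n,v_{ij})\,g(u_n,u_ju_i)$. Equating the two evaluations and solving for $g(u_n,v_{ij})$ gives the stated identity. The care needed here is purely bookkeeping of which correction terms vanish and which $\alpha$-shifts appear.

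Finally, for \eqref{b-homomorphism} I start from the cocycle identity $g(a,bb')=g(\alpha_{b'}(a),b)g(a,b')$ and write $\alpha_{b'}(a)=a+\zeta$ with $\zeta=\sum_i b'_i a_n v_{in}$ central. Exact additivity in the first slot splits off $g(\zeta,b)$, and Lemma~\ref{decomposition} together with the first formula already proved evaluates $g(\zeta,b)=g(z(\zeta),w(b))=\prod_{i,j}g(v_{in},u_j)^{b'_i a_n b_j}$; collecting the factors gives precisely the correction $\prod_{i,j}g(v_{in},u_j)^{b'_i b_j a_n}$ times $g(a,b)g(a,b')$. I expect no difficulty here once the first formula is in hand.
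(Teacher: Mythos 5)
Your proposal is correct and follows essentially the same route as the paper: you establish the bihomomorphism properties from \eqref{identities} and the vanishing of the $\lambda$-correction in \eqref{g-Hn} when the relevant argument is central, you obtain the key relation $g(v_{jn},u_i)=g(u_n,v_{ij})g(v_{in},u_j)$ by evaluating $g(u_n,u_iu_j)$ in two ways via $u_iu_j=v_{ij}u_ju_i$, and you derive \eqref{b-homomorphism} from the cocycle identity together with the first formula. The only cosmetic differences are that you invoke the normalization \eqref{g-trivial} where the paper lets the $g(u_n,u_i)$ factors cancel, and you inline the derivation of $g(u_n,v_{ij}u_ju_i)=g(u_n,v_{ij})g(u_n,u_ju_i)$ from \eqref{identities} rather than citing the multiplicativity in the second slot for central factors already recorded in the proof of Lemma~\ref{decomposition}.
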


\begin{proof}
Let $z(H(n))=\{z(a) \mid a\in H(n)\}$ and $z(G(n-1))=\{z(b) \mid b\in G(n-1)\}$.
Then $g$ is a bihomomorphism when restricted to $z(H(n))\times G(n-1)$ or $H(n)\times z(G(n-1))$.
Therefore, the first two identities hold.
Indeed, this follows directly from \eqref{identities} after noticing that since $z(a)$ and $z(b)$ are central,
\begin{displaymath}
\alpha_{w(b)}(z(a))=z(a) \quad\text{and}\quad \alpha_{z(b)}(w(a))=w(a).
\end{displaymath}
Moreover, for $i<j$ we have $u_iu_j=v_{ij}u_ju_i$.
By \eqref{identities} and the previous lemma, one calculates
\begin{displaymath}
\begin{split}
g(u_n,u_iu_j)&=g(\alpha_{u_j}(u_n),u_i)g(u_n,u_j)\\
            &=g(u_nv_{jn},u_i)g(u_n,u_j)\\
            &=g(u_n,u_i)g(v_{jn},u_i)g(u_n,u_j)
\end{split}
\end{displaymath}
and
\begin{displaymath}
\begin{split}
g(u_n,v_{ij}u_ju_i)&=g(u_n,v_{ij})g(u_n,u_ju_i)\\
                 &=g(u_n,v_{ij})g(\alpha_{u_i}(u_n),u_j)g(u_n,u_i)\\
                 &=g(u_n,v_{ij})g(u_nv_{in},u_j)g(u_n,u_i)\\
                 &=g(u_n,v_{ij})g(u_n,u_j)g(v_{in},u_j)g(u_n,u_i),
\end{split}
\end{displaymath}
so that
\begin{equation}\label{parameter-relation}
g(v_{jn},u_i)=g(u_n,v_{ij})g(v_{in},u_j),
\end{equation}
which gives the last identity in the second line of the statement.
Finally, we compute
\begin{displaymath}
\begin{split}
g(a,bb')&=g(\alpha_{b'}(a),b)g(a,b')=g(a+(0,b'_1a_n,\dotsc,b'_{n-1}a_n),b)g(a,b')\\
&=g((0,b'_1a_n,\dotsc,b'_{n-1}a_n),w(b))g(a,b)g(a,b')\\
&=\Big(\prod_{i=1}^{n-1} g(v_{in},w(b))^{b'_ia_n}\Big)g(a,b)g(a,b')\\
&=\Big(\prod_{i=1}^{n-1}\Big(\prod_{j=1}^{n-1}g(v_{in},u_j)^{b_j}\Big)^{b'_ia_n}\Big)g(a,b)g(a,b').
\end{split}
\end{displaymath}
\end{proof}

\begin{lems}\label{w-part}
For all $a\in H(n)$ and $b\in G(n-1)$ we have
\begin{displaymath}
\begin{split}
g(w(a),w(b))&=\Big(\prod_{i=1}^{n-1}\lambda_i^{\frac{1}{2}b_ia_n(a_n-1)}g(v_{in},u_i)^{\frac{1}{2}a_nb_i(b_i-1)}\Big)\\
&\cdot\prod_{1\leq i<j\leq n-1}g(v_{in},u_j)^{b_ib_ja_n}.
\end{split}
\end{displaymath}
\end{lems}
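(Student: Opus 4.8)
The plan is to compute $g(w(a),w(b))$ in two stages: first reduce the $H(n)$-argument to the single generator $u_n$ using \eqref{g-Hn}, then reduce the $G(n-1)$-argument to single generators using \eqref{b-homomorphism}, reading off the correction factors at each step. Since $w(a)=u_n^{a_n}$, I would begin by applying \eqref{g-Hn} with the splitting $(m-1)u_n+u_n$ to set up an induction on $m$ giving
\begin{displaymath}
g(mu_n,b)=\Big(\prod_{i=1}^{n-1}\lambda_i^{\frac{1}{2}b_im(m-1)}\Big)g(u_n,b)^m
\end{displaymath}
for every $b\in G(n-1)$ with $w$-coordinates $b_i$. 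The triangular exponent appears because the correction at the $m$-th step is $\prod_i\lambda_i^{b_i(m-1)}$ and $\sum_{l=0}^{m-1}l=\tfrac12 m(m-1)$. Taking $m=a_n$ and $b=w(b)$ already yields the factor $\prod_i\lambda_i^{\frac12 b_ia_n(a_n-1)}$ and reduces everything to computing $g(u_n,w(b))^{a_n}$.

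For the second stage I would factor the pure $w$-element in \emph{decreasing} order of the indices, $w(b)=u_{n-1}^{b_{n-1}}\cdots u_1^{b_1}$, and peel off one factor at a time. Writing $C_k=u_{n-1}^{b_{n-1}}\cdots u_k^{b_k}$ and using $C_k=C_{k+1}u_k^{b_k}$ in \eqref{b-homomorphism} with $a=u_n$ gives
\begin{displaymath}
g(u_n,C_k)=\Big(\prod_{j=k+1}^{n-1}g(v_{kn},u_j)^{b_kb_j}\Big)g(u_n,C_{k+1})\,g(u_n,u_k^{b_k}),
\end{displaymath}
and iterating from $k=n-1$ down to $k=1$ collects the cross terms into $\prod_{1\le i<j\le n-1}g(v_{in},u_j)^{b_ib_j}$. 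A parallel one-generator induction, applying \eqref{b-homomorphism} to $u_k^{m-1}\cdot u_k$ and using the normalization \eqref{g-trivial} to kill the base term $g(u_n,u_k)=1$, yields $g(u_n,u_k^{b_k})=g(v_{kn},u_k)^{\frac12 b_k(b_k-1)}$. Substituting both computations into $g(u_n,w(b))$ and raising to the $a_n$-th power assembles precisely the claimed formula.

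The main obstacle is the bookkeeping in the second stage, and the key point is the choice of ordering. Products of the $u_k$ acquire central coordinates — $u_i^{b_i}u_j^{b_j}$ with $i<j$ picks up $v_{ij}^{b_ib_j}$ — so I must arrange the factorization so that the intermediate elements $C_k$ remain genuinely central-free and \eqref{b-homomorphism} applies cleanly. The decreasing order is exactly what guarantees this: in forming $C_{k+1}u_k^{b_k}$ a new central coordinate in position $(i,j)$ with $i<j$ would need a contribution $(C_{k+1})_i(u_k^{b_k})_j$, but $(u_k^{b_k})_j\neq 0$ only for $j=k$ while $(C_{k+1})_i\neq 0$ only for $i\ge k+1>k$, so none arises. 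This same ordering forces the surviving cross terms to appear as $g(v_{in},u_j)$ with $i<j$ rather than in the opposite order; had I chosen increasing order I would meet $g(v_{jn},u_i)$, which by \eqref{parameter-relation} differs by the central factor $g(u_n,v_{ij})$ and would not match the stated normal form. Finally, I would remark that although the inductions are phrased for nonnegative powers, they extend to arbitrary integers, since $\tfrac12 m(m-1)$ is integer-valued for every $m\in\mathbb{Z}$ and the recursions run in both directions.
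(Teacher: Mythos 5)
Your proposal is correct and follows essentially the same route as the paper: a one-generator induction via \eqref{g-Hn} producing the $\lambda_i^{\frac12 b_i a_n(a_n-1)}$ factor, peeling off the rightmost factors of $w(b)=u_{n-1}^{b_{n-1}}\dotsm u_1^{b_1}$ via \eqref{b-homomorphism} to collect the cross terms $g(v_{in},u_j)^{b_ib_j}$, and a single-generator induction giving $g(u_n,u_k^{b_k})=g(v_{kn},u_k)^{\frac12 b_k(b_k-1)}$ after the normalization \eqref{g-trivial}. The only differences are cosmetic -- you perform the $a_n$-reduction first rather than last, and you make explicit the role of the decreasing-order factorization and the extension to negative exponents, points the paper treats implicitly.
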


\begin{proof}
First we see from \eqref{b-homomorphism} that if $b_j\geq 1$, then
\begin{displaymath}
\begin{split}
g(u_n,u_j^{b_j})&=g(u_n,u_j^{b_j-1}u_j)\\
  &=g(v_{jn},u_j)^{b_j-1}g(u_n,u_j^{b_j-1})g(u_n,u_j)\\
  &=\dotsm=g(v_{jn},u_j)^{\frac{1}{2}b_j(b_j-1)}g(u_n,u_j)^{b_j}
\end{split}
\end{displaymath}
and then it is not hard to see that
\begin{displaymath}
g(u_n,u_j^{b_j})=g(v_{jn},u_j)^{\frac{1}{2}b_j(b_j-1)}g(u_n,u_j)^{b_j}
\end{displaymath}
for negative $b_j$ as well, for example by applying \eqref{b-homomorphism} again.

Moreover, note that $w(b)=u_{n-1}^{b_{n-1}}\dotsm u_1^{b_1}$, so that by \eqref{b-homomorphism},
\begin{displaymath}
\begin{split}
g(u_n,w(b))&=g(u_n,u_{n-1}^{b_{n-1}}\dotsm u_1^{b_1})\\
&=\big(\prod_{j=2}^{n-1}g(v_{1n},u_j)^{b_1b_j}\big)g(u_n,u_{n-1}^{b_{n-1}}\dotsm u_2^{b_2})g(u_n,u_1^{b_1})\\
&=\dotsm=\big(\prod_{1\leq i<j\leq n-1}g(v_{in},u_j)^{b_ib_j}\big)\big(\prod_{j=1}^{n-1}g(u_n,u_j^{b_j})\big).
\end{split}
\end{displaymath}
Then by \eqref{g-Hn} for $a_n\geq 1$,
\begin{displaymath}
\begin{split}
g(w(a),w(b))&=g(a_nu_n,u_{n-1}^{b_{n-1}}\dotsm u_1^{b_1})\\
          &=\Big(\prod_{i=1}^{n-1}\lambda_i^{b_i(a_n-1)}\Big)\cdot g((a_n-1)u_n,u_{n-1}^{b_{n-1}}\dotsm u_1^{b_1})g(u_n,u_{n-1}^{b_{n-1}}\dotsm u_1^{b_1})\\
          &=\dotsm=\Big(\prod_{i=1}^{n-1}\lambda_i^{b_i\cdot\frac{1}{2}a_n(a_n-1)}\Big)\cdot g(u_n,u_{n-1}^{b_{n-1}}\dotsm u_1^{b_1})^{a_n}\\
          &=\Big(\prod_{i=1}^{n-1}\lambda_i^{\frac{1}{2}b_ia_n(a_n-1)}\Big)\cdot\Big(\prod_{1\leq i<j\leq n-1}g(v_{in},u_j)^{b_ib_ja_n}\Big)\\
          &\cdot\Big(\prod_{j=1}^{n-1}g(v_{jn},u_j)^{\frac{1}{2}a_nb_j(b_j-1)}g(u_n,u_j)^{a_nb_j}\Big).
\end{split}
\end{displaymath}
Again, it is not hard to see that a similar argument also works for negative $a_n$.
Finally, recall that we have chosen $g$ so that $g(u_n,u_j)=1$ by \eqref{g-trivial}.
\end{proof}

\begin{lems}
We have
\begin{displaymath}
\widetilde{H}^2(G(n),\mathbb{T})\cong\mathbb{T}^{n(n-1)},
\end{displaymath}
and for each set of $n(n-1)$ parameters
\begin{displaymath}
\{\lambda_{i,jn} : 1\leq i\leq n, 1\leq j\leq n-1\}\subseteq\mathbb{T},
\end{displaymath}
the associated $[\tau]$ in $\widetilde{H}^2(G(n),\mathbb{T})$ may be represented by
\begin{displaymath}
\begin{split}
\tau(a'b,ab')&=\prod_{1\leq i<j\leq n-1}\lambda_{i,jn}^{a_{jn}b_i+a_nb_{ij}}\lambda_{j,in}^{a_{in}b_j+a_n(b_ib_j-b_{ij})}\prod_{j=1}^{n-1}\lambda_{j,jn}^{a_{jn}b_j+\frac{1}{2}a_nb_j(b_j-1)}\\
             &\quad\cdot\prod_{j=1}^{n-1}\lambda_{n,jn}^{a'_n(a_{jn}+b_ja_n)+\frac{1}{2}b_ja_n(a_n-1)}.
\end{split}
\end{displaymath}
\end{lems}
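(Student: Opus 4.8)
The plan is to assemble the explicit formula for $\tau$ out of the three structural lemmas and then read off that the class of $\tau$ depends on exactly $n(n-1)$ free parameters in $\mathbb{T}$. By Lemma~\ref{lemma-Hn} together with the subsequent uniqueness lemma, every class in $\widetilde{H}^2(G(n),\mathbb{T})$ has a \emph{unique} representative of the form \eqref{tilde-formula} coming from a pair $(\sigma_{H(n)},g)$ satisfying \eqref{sigma-Hn}, \eqref{g-Hn} and the normalization \eqref{g-trivial}. Such a pair is pinned down by the $n-1$ scalars $\lambda_1,\dots,\lambda_{n-1}$ in $\sigma_{H(n)}$ together with the $(n-1)^2$ numbers $g(v_{in},u_j)$ for $1\le i,j\le n-1$, since Lemmas~\ref{decomposition}, \ref{z-part} and \ref{w-part} express $g(a,b)$ entirely through these values. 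I therefore expect a group of classes isomorphic to $\mathbb{T}^{(n-1)+(n-1)^2}=\mathbb{T}^{n(n-1)}$, and the first task is to make this correspondence explicit.

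First I would substitute the factorization $g(a,b)=g(w(a),w(b))\,g(w(a),z(b))\,g(z(a),w(b))$ of Lemma~\ref{decomposition}, together with the three evaluations from Lemmas~\ref{z-part} and \ref{w-part} and the value $\sigma_{H(n)}(a',\alpha_b(a))=\prod_{i=1}^{n-1}\lambda_i^{a'_n(a_{in}+b_ia_n)}$, into \eqref{tilde-formula}. Introducing the renaming $\lambda_{i,jn}:=g(v_{jn},u_i)$ for $1\le i,j\le n-1$ and $\lambda_{n,jn}:=\lambda_j$ for $1\le j\le n-1$, I would then collect the exponent of each parameter. One checks that $\lambda_{j,jn}$ receives $a_{jn}b_j$ from $g(z(a),w(b))$ and $\tfrac12 a_nb_j(b_j-1)$ from $g(w(a),w(b))$; that $\lambda_{n,jn}$ receives $a'_n(a_{jn}+b_ja_n)$ from $\sigma_{H(n)}$ and $\tfrac12 b_ja_n(a_n-1)$ from $g(w(a),w(b))$; and that, for $i<j$, the off-diagonal $\lambda_{i,jn}$ collects $a_{jn}b_i$ from $g(z(a),w(b))$ and $+a_nb_{ij}$ from $g(w(a),z(b))$, while $\lambda_{j,in}$ collects $a_{in}b_j$ from $g(z(a),w(b))$, $-a_nb_{ij}$ from $g(w(a),z(b))$ via \eqref{parameter-relation}, and $+a_nb_ib_j$ from $g(w(a),w(b))$. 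Summing these reproduces exactly the displayed formula.

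It then remains to promote this bookkeeping to the claimed isomorphism. The assignment sending a class to the tuple $\bigl(\lambda_i,\,g(v_{in},u_j)\bigr)$ is well defined and injective by the uniqueness lemma (the normalized pair is unique and $g$ is recovered from these values), and it is a homomorphism because the relevant cohomology operations are pointwise multiplication of cocycles. For surjectivity I would argue that the $n(n-1)$ scalars are genuinely unconstrained: given arbitrary values, take $\sigma_{H(n)}$ as in \eqref{sigma-Hn} and define $g$ as the product of the three formulas above; by Theorem~\ref{semidirect} the resulting $\tau$ is a bona fide multiplier provided $(\sigma_{H(n)},g)$ satisfies \eqref{identities}, which one verifies directly. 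The only relation met along the way, \eqref{parameter-relation}, merely expresses the auxiliary value $g(u_n,v_{ij})$ in terms of the free parameters and imposes no relation among the $g(v_{in},u_j)$.

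The main obstacle I anticipate is these last two points working in tandem: on the computational side, keeping straight which of $\lambda_{i,jn}$ and $\lambda_{j,in}$ each cross-term lands on, the asymmetry arising entirely from the $+a_nb_ib_j$ contribution of $g(w(a),w(b))$ and the sign in \eqref{parameter-relation}; and on the conceptual side, ensuring that surjectivity genuinely holds, that is, that no hidden relation among the $(n-1)^2$ values $g(v_{in},u_j)$ is forced by the cocycle identities \eqref{identities}. Once both are settled, the count $\widetilde{H}^2(G(n),\mathbb{T})\cong\mathbb{T}^{n(n-1)}$ follows.
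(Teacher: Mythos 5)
Your proposal is correct and takes essentially the same route as the paper: the paper's proof is precisely the substitution of the factorization of Lemma~\ref{decomposition} and the evaluations of Lemmas~\ref{z-part} and~\ref{w-part} into \eqref{tilde-formula} under the renaming $\lambda_{i,jn}=g(v_{jn},u_i)$ for $i,j<n$ and $\lambda_{n,jn}=\lambda_j$, with the uniqueness lemma pinning down the normalized pair $(\sigma_{H(n)},g)$. Your extra bookkeeping of exponents (including the role of \eqref{parameter-relation} and the cross-term $a_nb_ib_j$) and your explicit injectivity/surjectivity argument for the parameterization simply spell out what the paper leaves implicit, so there is no gap and no genuinely different method.
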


\begin{proof}
If one puts $\lambda_{i,jn}=g(v_{jn},u_i)$ for $i,j<n$ and $\lambda_{n,jn}=\lambda_j$ for $j<n$, then this is a consequence of the preceding lemmas.
Indeed, by \eqref{tilde-formula} we can represent $\tau$ as a pair $(\sigma_{H(n)},g)$.
Here $\sigma_{H(n)}$ is of the form \eqref{sigma-Hn} and $g$ can decomposed as in Lemma~\ref{decomposition}
with factors computed in Lemma~\ref{z-part} and Lemma~\ref{w-part}.
\end{proof}

To complete the proof of Theorem~\ref{main-theorem}, we set $r=a'b$ and $s=ab'$
and recall that by Corollary~\ref{inductively} we can compute $\sigma_n$ inductively as $[\sigma_n]=\prod_{k=2}^n[\tau_n]$.

Finally, we can also check that $\sum_{k=2}^n k(k-1)=\frac{1}{3}(n+1)n(n-1)$.

\section{The twisted group \texorpdfstring{$C^*$}{C*}-algebras \texorpdfstring{$C^*(G(n),\sigma)$}{C*(G(n),sigma)} of \texorpdfstring{$G(n)$}{G(n)}}\label{algebras}

Again, let $G$ be \emph{any} discrete group, $\sigma$ a two-cocycle of $G$ and $\mathcal{H}$ a nontrivial Hilbert space.
A map $U$ from $G$ into the unitary group of $\mathcal{H}$ satisfying
\begin{displaymath}
U(r)U(s)=\sigma(r,s)U(rs)
\end{displaymath}
for all $r,s\in G$ is called a \emph{$\sigma$-projective unitary representation of $G$ on $\mathcal{H}$}.

We recall the following facts about twisted group $C^*$-algebras and refer to Zeller-Meier \cite{Zeller-Meier} for further details of the construction.

To each pair $(G,\sigma)$, we may associate the full twisted group $C^*$-algebra $C^*(G,\sigma)$.
Denote the canonical injection of $G$ into $C^*(G,\sigma)$ by $i_{\sigma}$.
Then $C^*(G,\sigma)$ satisfies the following universal property.
Every $\sigma$-projective unitary representation of $G$ on some Hilbert space $\mathcal{H}$ (or in some unital $C^*$-algebra $A$)
factors uniquely through $i_{\sigma}$.

The reduced twisted group $C^*$-algebra $C^*_r(G,\sigma)$ is generated by
the left regular $\sigma$-projective unitary representation $\lambda_{\sigma}$ of $G$ on $B(\ell^2(G))$.
Consequently, $\lambda_{\sigma}$ extends to a $^*$-homomorphism of $C^*(G,\sigma)$ onto $C^*_r(G,\sigma)$.
If $G$ is amenable, then $\lambda_{\sigma}$ is faithful.
Note especially that every nilpotent group is amenable,
so that $C^*(G(n),\sigma)\cong C^*_r(G(n),\sigma)$ through $\lambda_{\sigma}$ for every $n\geq 1$ and all two-cocycles $\sigma$ of $G(n)$.

Finally, we remark that if $\tau\sim\sigma$ through some function $\beta\colon G\to\mathbb{T}$,
then the assignment $i_{\tau}(r)\mapsto\beta(r)i_{\sigma}(r)$ induces an isomorphism $C^*(G,\tau)\to C^*(G,\sigma)$.

\begin{theorem}[Remark~3.1 in the published version]\label{universal}
Fix $n\geq 2$, let $\sigma$ be a two-cocycle of $G(n)$ of the form \eqref{main-formula}, that is, determined by the $\frac{1}{3}(n+1)n(n-1)$ parameters
\begin{displaymath}
\{\lambda_{i,jk} : 1\leq i\leq k, 1\leq j<k\leq n\}\subseteq\mathbb{T},
\end{displaymath}
and set
\begin{equation}\label{lambda-dependence}
\lambda_{k,ij}=\overline{\lambda_{i,jk}}\lambda_{j,ik}
\end{equation}
when ${1\leq i<j<k\leq n}$.

Then the twisted group $C^*$-algebra $C^*(G(n),\sigma)$
is the universal $C^*$-algebra generated by unitaries $\{U_i\}_{1\leq i\leq n}$ and $\{V_{jk}\}_{1\leq j<k\leq n}$ satisfying the relations
\begin{equation}\label{relations}
[V_{jk},V_{lm}]=I, \quad [U_i,V_{jk}]=\lambda_{i,jk}I, \quad [U_j,U_k]=V_{jk}
\end{equation}
for ${1\leq i\leq n}$, ${1\leq j<k\leq n}$, and ${1\leq l<m\leq n}$.
\end{theorem}

\begin{proof}
Set $U_i=i_{\sigma}(u_i)$ and $V_{jk}=i_{\sigma}(v_{jk})$ and note that \eqref{main-formula} gives that $\sigma(u_i,v_{jk})=\lambda_{i,jk}$
and $\sigma(v_{jk},u_i)=1$ for all ${1\leq i\leq n}$ and ${1\leq j<k\leq n}$.
Thus,
\begin{displaymath}
[U_i,V_{jk}]=\sigma(u_i,v_{jk})\overline{\sigma(v_{jk},u_i)}I=\lambda_{i,jk}I\,\text{ for all }1\leq i\leq n,\,1\leq j<k\leq n.
\end{displaymath}
Moreover, note that $\sigma(u_i,u_j)=1$ for all ${1\leq i,j\leq n}$ and $\sigma(v_{jk},v_{lm})=1$ for all ${1\leq j<k\leq n}$ and ${1\leq l<m\leq n}$.
Hence, it is clear that $C^*(G(n),\sigma)$ is generated as a $C^*$-algebra by unitaries satisfying \eqref{relations}.

Next, suppose that $A$ is any $C^*$-algebra generated by a set of unitaries satisfying the relations \eqref{relations}.
For each $r$ in $G(n)$ we define the unitary $W_r$ in $A$ by
\begin{displaymath}
W_r=V_{12}^{r_{12}}\cdots V_{n-1,n}^{r_{n-1,n}}\cdot U_n^{r_n}\cdots U_1^{r_1}.
\end{displaymath}
Then a computation using \eqref{relations} repeatedly gives that\footnote{In general,
it will require much work to compute the formula for $\tau$ and it is not needed for this argument.
However, for $n=2$, the expression for $\tau$ is precisely of the form \eqref{G(2)-formula}.} $W_rW_s=\tau(r,s)W_{rs}$,
where $\tau(r,s)$ is a scalar in $\mathbb{T}$ for all $r,s\in G(n)$.
Now, the associativity of $A$ immediately implies that $\tau$ is a two-cocycle of $G(n)$,
so that $W$ is a $\tau$-projective unitary representation of $G(n)$ in $A$. 
Furthermore, note that $\tau$ satisfies
\begin{displaymath}
\tau(u_i,v_{jk})\overline{\tau(v_{jk},u_i)}=\lambda_{i,jk}\,\text{ for }1\leq i\leq n,\, 1\leq j<k\leq n.
\end{displaymath}
By the universal property of the full twisted group $C^*$-algebra,
there exists a unique $^*$-homomorphism $\varphi$ of $C^*(G(n),\tau)$ onto $A$ such that $\varphi(i_{\tau}(r))=W(r)$ for all $r\in G(n)$.

Therefore, it is sufficient to show that $\tau\sim\sigma$, because then $C^*(G(n),\tau)$ is canonically isomorphic to $C^*(G(n),\sigma)$.
By Theorem~\ref{main-theorem}, there is some $\beta\colon G(n)\to\mathbb{T}$ such that $\sigma'$,
given by
\begin{displaymath}
\sigma'(r,s)=\beta(r)\beta(s)\overline{\beta(rs)}\tau(r,s),
\end{displaymath}
is of the form \eqref{main-formula}.
We calculate that
\begin{displaymath}
\begin{split}
&\sigma'(u_i,v_{jk})\overline{\sigma'(v_{jk},u_i)}\\
&=\beta(u_i)\beta(v_{jk})\overline{\beta(u_iv_{jk})}\tau(u_i,v_{jk})\overline{\beta(v_{jk})}\overline{\beta(u_i)}\beta(v_{jk}u_i)\overline{\tau(v_{jk},u_i)}\\
&=\tau(u_i,v_{jk})\overline{\tau(v_{jk},u_i)}=\lambda_{i,jk}
\end{split}
\end{displaymath}
for all ${1\leq i\leq n}$ and ${1\leq j<k\leq n}$.
Hence, $\sigma'=\sigma$, so $\tau\sim\sigma$.
\end{proof}

The above relation \eqref{lambda-dependence} is a consequence of \eqref{parameter-relation} in the proof of Theorem~\ref{main-theorem}
and is the reason why $\lambda_{i,jk}$ for $i>k$ is not involved in the expression \eqref{main-formula}.
To illustrate this further, consider the three-dimensional case.
Let $U_1,U_2,U_3$ and $V_{12},V_{13},V_{23}$ be unitaries in a $C^*$-algebra $B$ satisfying
\begin{displaymath}
[V_{jk},V_{lm}]=I, \quad [U_i,V_{jk}]=\mu_{i,jk}I, \quad [U_j,U_k]=V_{jk}
\end{displaymath}
for ${1\leq i\leq 3}$, ${1\leq j<k\leq 3}$, and ${1\leq l<m\leq 3}$ where $\{\mu_{i,jk}\}$ is \emph{any} set of nine scalars in $\mathbb{T}$.
Then we can compute that
\begin{displaymath}
\begin{split}
U_1U_2U_3&=V_{12}U_2U_1U_3=\dotsm =\mu_{2,13}V_{12}V_{13}V_{23}U_3U_2U_1,\\
U_1U_2U_3&=U_1V_{23}U_3U_2=\dotsm =\mu_{1,23}\mu_{3,12}V_{12}V_{13}V_{23}U_3U_2U_1,
\end{split}
\end{displaymath}
that is, we must have $\mu_{2,13}=\mu_{1,23}\mu_{3,12}$.

For dimensions $n>3$, any choice of a triple of unitaries from the family $\{U\}_{i=1}^n$ gives a similar dependence.
In the $n\cdot\frac{1}{2}n(n-1)$ commutation relations, these $\binom{n}{3}$ dependencies are the only possible ones since
\begin{displaymath}
\textstyle{n\cdot\frac{1}{2}n(n-1)-\binom{n}{3}=\frac{1}{2}n(n-1)\left(n-\frac{1}{3}(n-2)\right)=\frac{1}{3}(n+1)n(n-1).}
\end{displaymath}

\begin{remark}\label{conjecture}
For $n\geq 2$, let $\omega$ be the dual two-cocycle of $G(n)$, that is,
\begin{displaymath}
\omega\colon G(n)\times G(n)\to\widehat{H^2(G(n),\mathbb{T})}\cong\mathbb{Z}^{\frac{1}{3}(n+1)n(n-1)}
\end{displaymath}
is determined by $\omega(r,s)(\sigma)=\sigma(r,s)$ for a two-cocycle $\sigma$ of $G(n)$.
Let the group $R(G(n))$ be defined as the set $\mathbb{Z}^{\frac{1}{3}(n+1)n(n-1)}\times G(n)$ with product
\begin{displaymath}
(j,r)(k,s)=(j+k+\omega(r,s),rs).
\end{displaymath}
It is not entirely obvious that $\omega$ and $R(G(n))$ are well-defined
and we refer to \cite[p.~689--690]{Packer-Raeburn} and \cite[Section~4]{EW} for details on this and the fact that $R(G(n))$ is a representation group for $G(n)$.
Moreover, according to \cite[Corollary~1.3]{Packer-Raeburn} we may construct a continuous field $A$ over $H^2(G(n),\mathbb{T})$
with fibers $A_{\lambda}\cong C^*(G(n),\sigma_{\lambda})$ for each $\lambda\in H^2(G(n),\mathbb{T})$.
Then the $C^*$-algebra associated with this continuous field will be naturally isomorphic to the group $C^*$-algebra of the group $R(G(n))$.

Next, we briefly consider the group $G(3,2)$ generated by $u_1,u_2,v_{12},w_1,w_2$ satisfying
\begin{displaymath}
[u_1,u_2]=v_{12},\quad [u_1,v_{12}]=w_1,\quad [u_2,v_{12}]=w_2,\quad w_1,w_2\text{ central}.
\end{displaymath}
Then we have $Z(G(3,2))\cong\mathbb{Z}^2$ and $Z(C^*(G(3,2)))\cong C(\mathbb{T}^2)$.

The following statement can also be deduced from \cite[Theorem~1.2 and Examples~1.4~(3)]{Packer-Raeburn},
but we include the analysis that follows, because it is similar to that used in Theorem~\ref{continuous-field}.

Let $i$ denote the canonical injection of $G(3,2)$ into $C^*(G(3,2))$.
For each $\lambda=(\lambda_1,\lambda_2)\in\mathbb{T}^2$, let $C^*(G(2),\sigma_{\lambda})$ be generated by unitaries satisfying \eqref{relations}.
By a similar argument as in Theorem~\ref{continuous-field}, there is a surjective $^*$-homomorphism
\begin{displaymath}
\pi_{\lambda}\colon C^*(G(3,2))\to C^*(G(2),\sigma_{\lambda})
\end{displaymath}
such that $i(u_i)=U_i$, $i(v_{12})=V_{12}$, and $i(w_i)=\lambda_iI$ for $i=1,2$.
Moreover, the kernel of $\pi_{\lambda}$ coincides with the ideal of $C^*(G(3,2))$ generated by
\begin{displaymath}
\lambda\in\operatorname{Prim}{Z(C^*(G(3,2)))}\cong\widehat{Z(C^*(G(3,2)))}=\mathbb{T}^2\cong H^2(G(2),\mathbb{T}).
\end{displaymath}

Again, similarly as in Theorem~\ref{continuous-field}, we define a set of sections and apply the Dauns-Hofmann Theorem.
In this way, the triple
\begin{displaymath}
\left(H^2(G(2),\mathbb{T}),\{C^*(G(2),\sigma_{\lambda})\}_{\lambda},\widetilde{C^*(G(3,2))}\right)
\end{displaymath}
is a full continuous field of $C^*$-algebras, and the $C^*$-algebra associated with this continuous field is naturally isomorphic to $C^*(G(3,2))$.

It is not difficult to see that $R(G(2))$ is isomorphic to $G(3,2)$.
We conjecture that $R(G(n))\cong G(3,n)$ also for $n\geq 3$,
where $G(3,n)$ is the free nilpotent group of class $3$ and rank $n$ as described in \eqref{class-3}, so that $A$ is isomorphic to $C^*(G(3,n))$.
For $n\geq 3$, the complicated part is to construct a homomorphism $R(G(n)) \to G(3,n)$,
find an isomorphism $\mathbb{Z}^{\frac{1}{3}(n+1)n(n-1)}\cong Z(G(3,n))$,
and then use \eqref{quotient} to produce a commuting diagram:
\begin{displaymath}
\begin{tikzcd}[column sep=large,row sep=large]
1 \arrow{r} & \mathbb{Z}^{\frac{1}{3}(n+1)n(n-1)} \arrow{r}\arrow{d}[swap]{\cong} & R(G(n)) \arrow{r}\arrow{d} & G(n) \arrow{r}\arrow{d}{=} & 1 \\
1 \arrow{r} & Z(G(3,n)) \arrow{r} & G(3,n) \arrow{r} & G(n) \arrow{r} & 1 
\end{tikzcd}
\end{displaymath}
In fact,
\cite[Proposition~2.2 and Remark~2.3]{Szymik} indicate that the representation group $R(G(m,n))$ for $G(m,n)$ defined similarly as above
may be isomorphic to $G(m+1,n)$ for all $m,n\geq 1$.

\end{remark}

\section{Simplicity of the twisted group \texorpdfstring{$C^*$}{C*}-algebras \texorpdfstring{$C^*(G(n),\sigma)$}{C*(G(n),sigma)}}\label{simplicity}

Let $\sigma$ be a two-cocycle of any group $G$.
An element $r$ of $G$ is called \emph{$\sigma$-regular} if $\sigma(r,s)=\sigma(s,r)$ whenever $s$ in $G$ commutes with $r$.
If $r$ is $\sigma$-regular, then every conjugate of $r$ is also $\sigma$-regular.
Therefore, we say that a conjugacy class of $G$ is $\sigma$-regular if it contains a $\sigma$-regular element.

\vspace{1em}

Let $n\geq 2$.
The conjugacy class $C_r$ of $r\in G(n)$ is infinite if $r\notin V(n)=Z(G(n))$.
Indeed, for any $s\in G(n)$ we have
\begin{equation*}
(srs^{-1})_i=r_i\text{ and }(srs^{-1})_{jk}=r_{jk}+s_jr_k-r_js_k.
\end{equation*}
Hence, $|C_r|=\infty$ if $r_i\neq 0$ for some $i$.
Of course, $C_r=\{r\}$ if $r\in V(n)$.

Now, we fix a two-cocycle $\sigma$ of $G(n)$ of the form \eqref{main-formula}.

\begin{lemma}\label{center}
Let $S(G(n))$ be the set of $\sigma$-regular central elements of $G(n)$, that is,
\begin{displaymath}
S(G(n))=\{r\in V(n) \mid \sigma(r,s)=\sigma(s,r)\text{ for all }s\in G(n)\}.
\end{displaymath}
Then $S(G(n))$ is a subgroup of $G(n)$ and $Z(C^*(G(n)),\sigma)\cong C(\widehat{S(G(n))})$.
\end{lemma}

\begin{proof}
It is not hard to check that $S(G(n))$ is a subgroup of $V(n)$.

We identify $C^*(G(n), \sigma)$ with $C_r^*(G(n),\sigma)\subseteq B(\ell^2(G))$.
Let $\delta_e$ in $\ell^2(G)$ be the characteristic function on $\{e\}$ and for an operator $T$ in $B(\ell^2(G))$, set $f_T=T\delta_e\in\ell^2(G)$.
If $T$ belongs to the center of $C^*(G(n),\sigma)$, then $f_T$ can be nonzero only on the finite $\sigma$-regular conjugacy classes of $G(n)$,
that is, on $S(G(n))$ (see e.g.\ \cite[Lemmas~2.3 and 2.4]{Omland}).

Next, let $C^*(S(G(n)),\sigma)$ be identified with $\{\lambda_\sigma(s) \mid s\in S(G(n))\}\subseteq B(\ell^2(G))$.
This means that $Z(C^*(G(n),\sigma)\subseteq C^*(S(G(n)),\sigma)$.
As the reverse inclusion obviously holds, we have $Z(C^*(G(n)),\sigma)=C^*(S(G(n)),\sigma)$.

Now, it is not difficult to see that $C^*(S(G(n)))\cong C^*(S(G(n)),\sigma)$.
Indeed, as $s\mapsto\lambda_{\sigma}(s)$ is a unitary representation of $S(G(n))$ into $C^*(S(G(n)),\sigma)$ and
the canonical tracial state $\tau$ on $C^*(S(G(n)),\sigma)$ is faithful and satisfies $\tau(\lambda_\sigma(s))=0$ for each nonzero $s\in S(G(n))$,
this is just a consequence of \cite[Th\'eor\`eme 4.22]{Zeller-Meier}. 
Altogether, we get
\begin{displaymath}
Z(C^*(G(n)),\sigma)=C^*(S(G(n)),\sigma)\cong C^*(S(G(n)))\cong C(\widehat{S(G(n))}).\qedhere
\end{displaymath}
\end{proof}

\begin{remark}
If $S(G(n))$ is nontrivial, we can describe $C^*(G(n),\sigma)$ as a continuous field of $C^*$-algebras over the base space $\widehat{S(G(n))}$.
The fibers will be isomorphic to $C^*(G(n)/S(G(n)),\omega)$ for some two-cocycle $\omega$ of $G(n)/S(G(n))$
(see \cite[Theorem~1.1]{Lee-Packer-N} and \cite[Theorem~1.2]{Packer-Raeburn} for further details).
\end{remark}

\begin{example}[{\cite[Lemma~3.8 and Theorem~3.9]{Lee-Packer-N}}]
Fix a two-cocycle $\sigma$ of $G(2)$ of the form \eqref{G(2)-formula} such that both $\lambda_{1,12}$ and $\lambda_{2,12}$ are torsion elements.
Let $p$ and $q$ be the smallest natural numbers such that $\lambda_{1,12}^p=\lambda_{2,12}^q=1$ and set $k=\operatorname{lcm}(p,q)$.
Clearly, $V(2)=\mathbb{Z}$ and $S(G(2))=k\mathbb{Z}$.
Moreover, $G(2)/S(G(2))$ can be identified with the group with product
\begin{displaymath}
(r_1,r_2,r_{12})(s_1,s_2,s_{12})=(r_1+s_1,r_2+s_2,r_{12}+s_{12}+r_1s_2\,\operatorname{mod}k\mathbb{Z} )
\end{displaymath}
for $r_1,r_2,s_1,s_2\in\mathbb{Z}$ and $r_{12},s_{12}\in\{0,1,\dotsc,k-1\}$.

Then $C^*(G(2),\sigma)$ is a continuous field of $C^*$-algebras over the base space $\widehat{S(G(2))}\cong\mathbb{T}$.
The fibers will be isomorphic to $C^*(G(n)/S(G(n)),\omega_{\lambda})$, where $\lambda\in\mathbb{T}$ and
\begin{displaymath}
\omega_{\lambda}(r,s)=\sigma(r,s)\mu^{r_1s_2}
\end{displaymath}
for some $\mu\in\mathbb{T}$ with $\mu^k=\lambda$.
\end{example}

Characterizations for simplicity of of twisted group $C^*$-algebras of two-step nilpotent groups
have been given in \cite[Corollary~1.4]{Lee-Packer-N} and \cite[Corollary~1.6]{Packer-Raeburn}.
For the groups $G(n)$, the necessary and sufficient conditions for simplicity are somewhat easier to provide.

\begin{theorem}\label{simplicity-theorem}
The following are equivalent:
\begin{enumerate}
\item[(i)] $C^*(G(n),\sigma)$ is simple.
\item[(ii)] $C^*(G(n),\sigma)$ has trivial center.
\item[(iii)] There are no nontrivial central $\sigma$-regular elements in $G(n)$.
\end{enumerate}
\end{theorem}

\begin{proof}
By \cite[Theorem~1.7]{Packer-N} $C^*(G(n),\sigma)$ is simple if and only if every nontrivial $\sigma$-regular conjugacy class of $G(n)$ is infinite.
Since every finite conjugacy class of $G(n)$ is a one-point set of a central element, then (i) is equivalent with (iii).

Moreover, (iii) is the same as saying that $S(G(n))$ is trivial, so therefore, (ii) is equivalent with (iii) by Lemma~\ref{center}.
This also follows from \cite[Theorem~2.7]{Omland}.
\end{proof}

\begin{lemma}\label{sigma-regular}
A central element $s=(0,\dotsc,0,s_{12},s_{13},\dotsc,s_{n-1,n})$ of $G(n)$ is $\sigma$-regular if and only if
\begin{displaymath}
\prod_{1\leq j<k\leq n} \lambda_{i;jk}^{s_{jk}}=1
\end{displaymath}
for all $1\leq i\leq n$.
\end{lemma}

\begin{proof}
Clearly, an element $s=(0,\dotsc,0,s_{12},s_{13},\dotsc,s_{n-1,n})\in V(n)$ is $\sigma$-regular
if and only if $\sigma(s,r)=\sigma(r,s)$ for all $r\in G(n)$.
By a direct calculation from the cocycle formula \eqref{main-formula}, we get that 
\begin{displaymath}
\begin{split}
\sigma(r,s)\overline{\sigma(s,r)}&=
\Big(\prod_{i<j<k}\lambda_{i,jk}^{s_{jk}r_i}\lambda_{j,ik}^{s_{ik}r_j}\Big)
\Big(\prod_{j<k}\lambda_{j,jk}^{s_{jk}r_j}\lambda_{k,jk}^{r_ks_{jk}}\Big)
\Big(\prod_{i<j<k}\lambda_{i,jk}^{-r_ks_{ij}}\lambda_{j,ik}^{r_ks_{ij}}\Big)\\
&=\prod_{i=1}^n\Big(\prod_{1\leq j<k\leq n} \lambda_{i,jk}^{s_{jk}}\Big)^{r_i}
\end{split}
\end{displaymath}
is equal to $1$ for all $r\in G(n)$ if and only if the inner parenthesis is $1$ for each $1\leq i\leq n$.
\end{proof}

\begin{corollary}\label{simplicity-formula}
We have that $C^*(G(n),\sigma)$ is simple if and only if
for each nontrivial central element $s=(0,\dotsc,0,s_{12},s_{13},\dotsc,s_{n-1,n})$ there is some $1\leq i\leq n$ such that
\begin{displaymath}
\prod_{1\leq j<k\leq n} \lambda_{i,jk}^{s_{jk}}\neq 1.
\end{displaymath}
\end{corollary}

\begin{example}
In particular, $C^*(G(3),\sigma)$ is simple if and only if for each nontrivial central element $s=(0,0,0,s_{12},s_{13},s_{23})$ at least one of the following hold:
\begin{displaymath}
\begin{split}
\lambda_{1,12}^{s_{12}}\lambda_{1,13}^{s_{13}}\lambda_{1,23}^{s_{23}}&\neq 1,\\
\lambda_{2,12}^{s_{12}}\lambda_{2,13}^{s_{13}}\lambda_{2,23}^{s_{23}}&\neq 1,\\
\lambda_{3,12}^{s_{12}}\lambda_{3,13}^{s_{13}}\lambda_{3,23}^{s_{23}}&\neq 1.
\end{split}
\end{displaymath}
\end{example}

Next, set $\lambda_{i,jk}=e^{2\pi it_{i,jk}}$ for $t_{i,jk}\in [0,1)$
and consider the $n\times\frac{1}{2}n(n-1)$-matrix $T$ with entries $t_{i,jk}$ in the corresponding spots.
Then $T$ induces a linear map
\begin{displaymath}
\mathbb{R}^{\frac{1}{2}n(n-1)}\to\mathbb{R}^n.
\end{displaymath}

\begin{corollary}\label{matrix-T}
Let $T$ be the matrix described above.
Then following are equivalent:
\begin{enumerate}
\item[(i)] $C^*(G(n),\sigma)$ is simple
\item[(ii)] $T^{-1}(\mathbb{Z}^n)\cap\mathbb{Z}^{\frac{1}{2}n(n-1)} = \{0\}$
\item[(iii)] $T(\mathbb{Z}^{\frac{1}{2}n(n-1)}\setminus\{0\})\cap\mathbb{Z}^n =\varnothing$
\end{enumerate}
\end{corollary}

\begin{remark}
Clearly, condition (ii) above is equivalent to that $T$ restricts to an injective map
\begin{displaymath}
\mathbb{Z}^{\frac{1}{2}n(n-1)}\to\mathbb{R}^n/\mathbb{Z}^n\cong\mathbb{T}^n.
\end{displaymath}
\end{remark}

Furthermore, for $1\leq j<k\leq n$, define
\begin{displaymath}
\Lambda_{jk}=\{t_{i,jk}\in [0,1),\, 1\leq i\leq n \mid e^{2\pi i t_{i,jk}}=\lambda_{i,jk}\}
\end{displaymath}
and for $1\leq i\leq n$, define
\begin{displaymath}
\Lambda_i=\{t_{i,jk}\in [0,1),\, 1\leq j<k\leq n \mid e^{2\pi i t_{i,jk}}=\lambda_{i,jk}\}.
\end{displaymath}

\begin{proposition}
If there exists $i$ such that all the elements of $\Lambda_i$ are irrational and linearly independent over $\mathbb{Q}$,
then $C^*(G(n),\sigma)$ is simple.
\end{proposition}

\begin{proof}
It follows immediately from Lemma~\ref{sigma-regular}, that ``equation $i$'' cannot be satisfied unless $s=0$.
Hence, no nontrivial $\sigma$-regular central elements exists.
\end{proof}

\begin{proposition}
If there exists $j<k$ such that $\Lambda_{jk}$ consists of only rational elements, then $C^*(G(n),\sigma)$ is not simple.
\end{proposition}

\begin{proof}
Let $q$ be the least common two-cocycle of the denominators of the elements of $\Lambda_{jk}$.
Then $qv_{jk}$ is central and $\sigma$-regular. Indeed,
\begin{displaymath}
\sigma(r,qv_{jk})\overline{\sigma(qv_{jk},r)}=\prod_{i=1}^{n-1}\lambda_{i,jk}^{qr_i}=1
\end{displaymath}
for all $r\in G(n)$.
\end{proof}

\begin{remark}
In the case where $C^*(G(n),\sigma)$ is not simple, some more information about the primitive ideal space can be deduced from \cite[Proposition~1.3]{Lee-Packer-N}.
\end{remark}

\section{On isomorphisms invariants of \texorpdfstring{$C^*(G(n),\sigma)$}{C*(G(n),sigma)}}\label{isomorphisms}

Fix $n\geq 2$ and let $\sigma$ be a two-cocycle of $G(n)$.
If $\varphi$ is an automorphism of $G(n)$, define the two-cocycle $\sigma_{\varphi}$ of $G(n)$ by
\begin{equation}\label{aut-action}
\sigma_{\varphi}(r,s)=\sigma(\varphi(r),\varphi(s)).
\end{equation}
Then it is well-known that the associated twisted group $C^*$-algebras $C^*(G(n),\sigma)$ and $C^*(G(n),\sigma_{\varphi})$ are isomorphic.
Indeed, the map
\begin{displaymath}
i_{(G,\sigma)}(r)\mapsto i_{(G,\sigma_{\varphi})}(\varphi^{-1}(r))
\end{displaymath}
extends to an isomorphism $C^*(G(n),\sigma)\to C^*(G(n),\sigma_{\varphi})$.
Moreover, for any automorphism $\varphi$ of $G(n)$, it is easily seen that two two-cocycles $\sigma$ and $\tau$ of $G(n)$ are similar
if and only if $\sigma_{\varphi}$ and $\tau_{\varphi}$ are similar.
Hence, there is a well-defined group action of the automorphism group $\operatorname{Aut}G(n)$ on $H^2(G(n),\mathbb{T})$
defined by $\varphi\cdot [\sigma]=[\sigma_{\varphi}]$.

Therefore, we will now briefly investigate $\operatorname{Aut}G(n)$.
Let $V(n)^n$ be the subgroup of $\operatorname{Aut}G(n)$
consisting of the automorphisms $G(n)\to G(n)$ of the form $u_i\mapsto z_iu_i$ for $1\leq i\leq n$ and elements $z_i\in V(n)=Z(G(n))$.
In particular, these automorphisms leave all the $v_{jk}$'s fixed, i.e.\ $V(n)^n$ is the subgroup of $\operatorname{Aut}G(n)$ leaving $V(n)$ fixed.
Clearly, $V(n)^n$ contains $\operatorname{Inn}G(n)$.
In fact, in the case $n=2$, we have $V(2)^2=\operatorname{Inn}G(2)$.

\begin{proposition}
There is a split short exact sequence:
\begin{displaymath}
\begin{tikzcd}
1 \arrow{r} & V(n)^n \arrow{r} & \operatorname{Aut}{G(n)} \arrow{r} & \operatorname{GL}(n,\mathbb{Z}) \arrow{r} & 1 
\end{tikzcd}
\end{displaymath}
\end{proposition}

\begin{proof}
Assume that $\varphi$ is any endomorphism $G(n)\to G(n)$.
The image of a central element under $\varphi$ must be central, so $\varphi$ restricts to an endomorphism $\varphi_1\colon V(n)\to V(n)$.
Therefore, $\varphi$ also induces an endomorphism $\varphi_2\colon G(n)/V(n)\to G(n)/V(n)$ determined by $\varphi_2(q(r))=q(\varphi(r))$.
Consider now the following commutative diagram:
\begin{displaymath}
\begin{tikzcd}[column sep=large,row sep=large]
1 \arrow{r} & V(n) \arrow{r}{i}\arrow{d}[swap]{\varphi_1} & G(n) \arrow{r}{q}\arrow{d}[swap]{\varphi} & \mathbb{Z}^n \arrow{r}\arrow{d}{\varphi_2} & 1 \\
1 \arrow{r} & V(n) \arrow{r}{i} & G(n) \arrow{r}{q} & \mathbb{Z}^n \arrow{r} & 1 
\end{tikzcd}
\end{displaymath}
Assume that $\varphi_2$ is an automorphism.
Since $\varphi_2$ is surjective, then for all $u_i$ there is some $s_i\in G(n)$ such that $\varphi(s_i)=z_iu_i$ for some $z_i\in V(n)$.
Hence, for all $j<k$, we have $\varphi_1(s_js_ks_j^{-1}s_k^{-1})=v_{jk}$ and therefore, $\varphi_1$ is surjective.
Every surjective endomorphism of $\mathbb{Z}^n$ is also injective, so $\varphi_1$ is an automorphism as well.
Thus, by the ``short five lemma'', $\varphi$ is an automorphism.

The converse obviously holds and hence, $\varphi$ is an automorphism if and only if $\varphi_2$ is an automorphism.

Furthermore, the construction of $G(n)$ in terms of generators and relations
means that every endomorphism $G(n)\to G(n)$ is uniquely determined by its values at $\{u_i\}_{i=1}^n$.
In particular, we let $\varphi\colon G(n)\to G(n)$ be determined by the pair of matrices given by its entries
\begin{displaymath}
(\varphi(u_i)_j),(\varphi(u_i)_{jk})\in M_n(\mathbb{Z})\times M_{n,\frac{1}{2}n(n-1)}(\mathbb{Z})
\end{displaymath}
so that the induced endomorphism $\varphi_2$ is coming from a matrix in $M_n(\mathbb{Z})$.

By the above argument, the map between endomorphism groups defined by
\begin{equation}\label{aut-map}
\operatorname{End}G(n)\to\operatorname{End}\mathbb{Z}^n,\quad(\varphi(u_i)_j),(\varphi(u_i)_{jk})\mapsto (\varphi(u_i)_j)
\end{equation}
restricts to a surjective map $\operatorname{Aut}G(n)\to\operatorname{Aut}\mathbb{Z}^n=\operatorname{GL}(n,\mathbb{Z})$.

Before concluding the argument, we need the following.
\begin{lemma}[\MakeLowercase{nested inside the proof}]
If $\varphi$ and $\varphi'$ are two endomorphisms of $G(n)$, then
\begin{displaymath}
(\varphi\circ\varphi')(u_i)_j=\sum_{k=1}^n\varphi'(u_i)_k\varphi(u_k)_j.
\end{displaymath}
If $\varphi$ and $\varphi'$ are two endomorphisms of $G(n)$ that both induce the trivial map on $G(n)/V(n)$, then
\begin{displaymath}
(\varphi\circ\varphi')(u_i)_{jk}=\varphi'(u_i)_{jk}+\varphi(u_i)_{jk}.
\end{displaymath}
\end{lemma}

\begin{proof}
For the moment, set $\varphi(u_i)_j=r_{ij}$ and $\varphi'(u_i)_j=s_{ij}$.
Then
\begin{displaymath}
(\varphi\circ\varphi')(u_i)=\varphi(u_n^{s_{in}}\dotsm u_1^{s_{i1}}z)=(u_n^{r_{nn}}\dotsm u_1^{r_{n1}})^{s_{in}}\dotsm (u_n^{r_{1n}}\dotsm u_1^{r_{11}})^{s_{i1}}z'
\end{displaymath}
for some elements $z,z'\in V(n)$.
Moreover, we can change the order of the $u_i$'s in the expression just by replacing $z'$ by another central element $z''$ and thus,
\begin{displaymath}
(\varphi\circ\varphi')(u_i)_j=r_{nj}s_{in}+r_{n-1,j}s_{i,n-1}+\dotsm +r_{1j}s_{i1}=\sum_{k=1}^ns_{ik}r_{kj}.
\end{displaymath}
If both $\varphi_2$ and $\varphi'_2$ are trivial,
then $\varphi(u_i)=z_iu_i$ and $\varphi'(u_i)=z_i'u_i$ for all $1\leq i\leq n$ and some elements $z_i,z_i'\in V(n)$.
Hence, $\varphi(v_{jk})=\varphi'(v_{jk})=v_{jk}$ for all $j<k$ and thus,
\begin{displaymath}
(\varphi\circ\varphi')(u_i)=\varphi(z_i'u_i)=z_i'z_iu_i.
\qedhere
\end{displaymath}
\end{proof}
Therefore, \eqref{aut-map} restricts to a surjective \emph{homomorphism} $\operatorname{Aut}G(n)\to\operatorname{GL}(n,\mathbb{Z})$ with kernel isomorphic to
the group $M_{n,\frac{1}{2}n(n-1)}(\mathbb{Z})$ under addition, that is, to $V(n)^n$.

Moreover, if $A$ is a matrix in $\operatorname{GL}(n,\mathbb{Z})$ with entries $a_{ij}$,
then one can define an automorphism $\varphi_A$ of $G(n)$ by $\varphi_A(u_i)_j=a_{ij}$.
Thus, it should be clear that $\operatorname{GL}(n,\mathbb{Z})$ sits inside $\operatorname{Aut}G(n)$ as a subgroup so that the sequence splits.
\end{proof}

\begin{proposition}
If $\varphi$ belongs to $V(n)^n$, then $\sigma_{\varphi}$ is similar to $\sigma$.
Thus, the action of $V(n)^n$ on $H^2(G(n),\mathbb{T})$ given by \eqref{aut-action} is trivial.
\end{proposition}

\begin{proof}
It is not hard to see that
\begin{displaymath}
\sigma(u_i,v_{jk})\overline{\sigma(v_{jk},u_i)}=\sigma_{\varphi}(u_i,v_{jk})\overline{\sigma_{\varphi}(v_{jk},u_i)},
\end{displaymath}
that is,
\begin{displaymath}
[i_{(G,\sigma)}(u_i),i_{(G,\sigma)}(v_{jk})]=[i_{(G,\sigma_{\varphi})}(u_i),i_{(G,\sigma_{\varphi})}(v_{jk})]
\end{displaymath}
for all $1\leq i\leq n$ and $1\leq j<k\leq n$.
It then follows from the universal property of $C^*(G(n),\sigma)$ described in Theorem~\ref{universal} that $\sigma_{\varphi}\sim\sigma$.
\end{proof}

To describe the $\operatorname{GL}(n,\mathbb{Z})$-action on $H^2(G(n),\mathbb{T})$ requires more work.
To any $A$ in $\operatorname{GL}(n,\mathbb{Z})$ we may associate a square matrix $\tilde{A}$ of dimension $\frac{1}{2}n(n-1)$,
with entries coming from the determinant of all $2\times 2$-matrices inside $A$.
More precisely, if $A=(a_{ij})$, $\tilde{A}$ is given by entries $\tilde{a}_{ij,kl}$ for $i<j,k<l$ such that $\tilde{a}_{ij,kl}=a_{ik}a_{jl}-a_{il}a_{jk}$.
Then $A$ acts on the matrix $T$ defined prior to Corollary~\ref{matrix-T} by $A\cdot T=AT\tilde{A}$.
Tedious computations of commutation relations and use of the universal property of $C^*(G(n),\sigma)$ from Theorem~\ref{universal} now lead to the following result.
\begin{proposition}
Let $\sigma$ and $\sigma'$ be two-cocycles of $G(n)$ of the form \eqref{main-formula}
and let $T$ and $T'$ be the associated matrices of Corollary~\ref{matrix-T}.
If there exists a matrix $A$ in $\operatorname{GL}(n,\mathbb{Z})$ such that $A\cdot T=T'$,
then $C^*(G(n),\sigma)$ and $C^*(G(n),\sigma')$ are isomorphic.
\end{proposition}
For $n=2$, it is shown by Packer \cite[Theorem~2.9]{Packer-N} that $C^*(G(2),\sigma)$ and $C^*(G(2),\sigma')$,
where $\sigma$ and $\sigma'$ are of the form \eqref{main-formula},
are isomorphic if and only if there is a $\operatorname{GL}(2,\mathbb{Z})$-matrix $A$ taking $\sigma$ to $\sigma'$.
Note in this case that $\tilde{A}=\operatorname{det}A=\pm 1$.

For $n \geq 3$, it is at the moment not clear whether the $\operatorname{GL}(n,\mathbb{Z})$-action on $H^2(G(n),\mathbb{T})$ described above
is such that the orbits represent different isomorphism classes of twisted group $C^*$-algebras.
Therefore, the problem of determining the isomorphism classes of $C^*(G(n),\sigma)$ remains open for future investigation.

\begin{remark}
Every two-cocycle $\sigma_{\lambda}$ of $G(n)$ is of the form $e^{2\pi i\widetilde{\sigma}_{\lambda}}$ for some two-cocycle $\widetilde{\sigma}_{\lambda}$ on $G(n,\mathbb{R})$.
Moreover, any pair of two-cocycles $\sigma_{\lambda}$ and $\sigma_{\mu}$ of the form described above
are homotopic in the sense of Packer and Raeburn \cite[Section~4]{Packer-Raeburn-2}.
Hence, one may use \cite[Theorem~4.2 and Corollary~4.5]{Packer-Raeburn-2} to deduce that
\begin{displaymath}
K_i(C^*(G(n)),\sigma_{\lambda})\cong K_i(C^*(G(n)),\sigma_{\mu})\cong K_{\textup{top}}^{i+\frac{1}{2}n(n-1)}(G(n,\mathbb{R})/G(n)).
\end{displaymath}
\end{remark}

\begin{remark}[new]\label{new}
The groups $G(n)$, or more generally all the free nilpotent groups $G(m,n)$ of class $m$ and rank $n$ are finitely generated (torsion-free) nilpotent groups. Let $\sigma$ be a two-cocycle of $G(m,n)$ and suppose that there are no nontrivial central $\sigma$-regular elements in $G(m,n)$, that is, $C^*(G(m,n),\sigma)$ is simple with a unique trace by \cite{Packer-H} (see Theorem~\ref{simplicity-theorem} and the subsequent results above for the case of $G(n)$). Then there exists an irreducible representation $\pi_\sigma$ of $G(m+1,n)$ such that $C^*(G(m,n),\sigma)\cong C^*(\pi_\sigma(G(m+1,n)))$ by using Theorem~\ref{continuous-field}, Remark~\ref{conjecture}, and \eqref{quotient}. Thus it follows from \cite{EckMcK} and \cite{EckGil} that $C^*(G(m,n),\sigma)$ is classified by its ordered $K$-theory within the class of simple, unital, nuclear $C^*$-algebras with finite nuclear dimension that satisfy the universal coefficient theorem.
\end{remark}

\bibliographystyle{plain}

\addcontentsline{toc}{section}{References}

\vspace{1em}

Department of Mathematical Sciences, Norwegian University of Science and Technology (NTNU), NO-7491 Trondheim, Norway.


\end{document}